\title[Steenrod closed $C_3$-invariant parameter ideals]{Steenrod closed $C_3$-invariant parameter ideals in the mod 2 cohomology of $\mathbb{Z}/2\times\mathbb{Z}/2$}
\author{Henrik Rüping and Marc Stephan}
\date{\today}
\definecolor{darkblue}{rgb}{0,0,0.6}
\newcounter{commentcounter}
\newcommand{\ignore}[1]{}
\newcommand{\FF}{\mathbb{F}}
\newcommand{\ZZ}{\mathbb{Z}}
\DeclareMathOperator{\Sq}{Sq}
\DeclareMathOperator{\SL}{SL}
\DeclareMathOperator{\Ext}{Ext}
\DeclareMathOperator{\Qd}{Qd}
\numberwithin{equation}{section}
\newtheorem{theorem}[equation]{Theorem}
\newtheorem{corollary}[equation]{Corollary}
\newtheorem{lemma}[equation]{Lemma}
\newtheorem{proposition}[equation]{Proposition}
\theoremstyle{definition}
\newtheorem{example}[equation]{Example}
\newtheorem{definition}[equation]{Definition}
\newtheorem{remark}[equation]{Remark}
\newtheorem{question}[equation]{Question}
\subjclass{Primary  55M35; Secondary 13C05, 55S10, 20J06}
\keywords{Steenrod closed parameter ideals, $A_4$-actions on finite CW complexes}
\begin{document}
\begin{abstract}
For the nontrivial action by the cyclic group  $C_3$ of order $3$ on the graded polynomial ring $\FF_2[a,b]$, we classify the $C_3$-invariant parameter ideals that are closed under Steenrod operations. The classification has applications to free actions by the Klein four-group $\ZZ/2\times \ZZ/2$ on products of two spheres (and more generally, finite CW complexes with four-dimensional mod $2$ homology) that extend to actions by the alternating group $A_4=(\ZZ/2\times \ZZ/2)\rtimes C_3$.
\end{abstract}
\maketitle

\section{Introduction}

A classical topic in equivariant topology concerns actions of finite groups on products of spheres. For example, it is open whether the group $\Qd(p)=(\ZZ/p\times \ZZ/p)\rtimes \SL_2(\FF_p)$ for odd primes $p$ acts freely on a finite CW complex with homotopy type a product of two spheres, i.e., whether it satisfies the rank conjecture of Benson and Carlson \cite{bensoncarlson1987}. As already apparent in the work of Benson and Carlson, it is fruitful to study equivariant complexes both algebraically and topologically.

The alternating group on four letters $A_4$ is a semi-direct product $A_4=(\ZZ/2\times \ZZ/2)\rtimes C_3$ and is similar to $\Qd(p)$, but less complicated. Neither $A_4$ nor $\Qd(p)$ can act freely on a product of two equi-dimensional spheres, see \cite[Theorem 2]{oliver1979} and \cite[Theorem~1.3]{okayyalcin2018} respectively. Inspired by Oliver's result, we investigated finite, free $A_4$-CW complexes with four-dimensional $\FF_2$-cohomology topologically in \cite{ruepingstephanyalcin2022} and perfect cochain complexes over the group ring $\FF_2[A_4]$ with four-dimensional homology algebraically in \cite{ruepingstephan2024}. Algebraically, there is a complete classification; see \cite[Theorem~1.1]{ruepingstephan2024}: The perfect cochain complexes over $\FF_2[A_4]$ with four-dimensional homology are classified by an integer (corresponding to the degree of the lowest nonzero homology group) and a $C_3$-invariant parameter ideal in the group cohomology of $P\coloneqq\ZZ/2\times \ZZ/2$.

Steenrod operations provide an obstruction to realizing perfect cochain complexes over $\FF_2[A_4]$ topologically. If the perfect cochain complex with four-dimen{\-}sional homology arises from a finite CW complex with a free $A_4$-action (or more generally, $P$-free $A_4$-action), then the corresponding parameter ideal is closed under Steenrod operations; see \cite[Proposition~7.4]{ruepingstephanyalcin2022},
\cite[Lemma~8.1]{ruepingstephan2024}.

In joint work with Erg{\" u}n Yal{\c c}{\i }n, we classified the Steenrod closed parameter ideals
in $H^*(BA_4)\cong H^*(BP)^{C_3}$; see \cite[Theorem~1.2]{ruepingstephanyalcin2022}. Any such ideal extends to a Steenrod closed $C_3$-invariant parameter ideal in $H^*(BP)$. Here we complete the classification: The remaining Steenrod closed $C_3$-invariant parameter ideals in $H^*(BP)\cong \FF_2[a,b]$ are $\langle a^i, b^i\rangle$ where $i$ is any power of $2$; see \cref{thm:classification_ideals_from_orbits} and \cref{prop:crit_notinv_parameters}.

This has topological applications: 
\begin{theorem}[see \cref{thm:actionspaces} and \cref{thm:oddaction0137}]\label{thm:actionspacesintro}
Let $P=\ZZ/2\times \ZZ/2$ be the $2$-Sylow subgroup of $A_4$.
\begin{enumerate}[wide, labelindent=0pt]
\item\label{item:Borelcohomology_determined} If $X$ is a  finite, free $P$-CW complex such that $H^*(X;\FF_2)\cong H^*(S^n\times S^n;\FF_2)$ for some $n\geq 0$ as graded vector spaces and the $P$-action extends to an $A_4$-action, then the $P$-equivariant Borel cohomology ring of $X$ is
\[H^*(X/P;\FF_2)\cong \FF_2[a,b]/\langle a^{n+1}, b^{n+1}\rangle\,.\]
\item\label{item:actiononproductspheres} There exists a finite, free $P$-CW complex of the homotopy type of $S^n\times S^n$ such that the $P$-action extends to an $A_4$-action if and only if $n\in \{0,1,3,7\}$.
\end{enumerate}
\end{theorem}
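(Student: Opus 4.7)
My plan is to derive part (1) from the paper's classification of Steenrod closed $C_3$-invariant parameter ideals via a Borel cohomology computation, and then to handle part (2) by combining part (1) with a finer obstruction for the only-if direction and explicit division-algebra constructions for the if direction.

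For part (1): since the $P$-action is free, $H^*(X/P;\FF_2)$ agrees with the Borel cohomology $H^*_P(X;\FF_2)$. I would analyze the Serre spectral sequence of $X\to X/P\to BP$, whose $E_2$-page is $\FF_2[a,b]\otimes_{\FF_2} H^*(S^n\times S^n;\FF_2)$. The kernel $I\subseteq \FF_2[a,b]$ of the edge homomorphism is a parameter ideal, is Steenrod closed by \cite[Proposition~7.4]{ruepingstephanyalcin2022}, and becomes $C_3$-invariant once the $P$-action extends to $A_4$. Running through the classification from \cite[Theorem~1.2]{ruepingstephanyalcin2022}, \cref{thm:classification_ideals_from_orbits}, and \cref{prop:crit_notinv_parameters}, and comparing Poincar\'e series with the $E_2$-rank, pins down $I=\langle a^{n+1},b^{n+1}\rangle$. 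A surjectivity check on the edge map converts this into the stated ring isomorphism.

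For the only-if direction of part (2), part (1) already forces $n+1$ to be a power of $2$: the ideal $\langle a^{n+1},b^{n+1}\rangle$ is $C_3$-invariant only then, since $(a+b)^{n+1}$ modulo $\langle a^{n+1},b^{n+1}\rangle$ survives unless all intermediate binomial coefficients vanish mod~$2$. Ruling out $n+1\geq 16$ is the crux. My approach is to pass to the $A_4$-equivariant Borel cohomology $H^*(X/A_4;\FF_2)\cong(\FF_2[a,b]/\langle a^{n+1},b^{n+1}\rangle)^{C_3}$ and identify its kernel in $H^*(BA_4)=\FF_2[a,b]^{C_3}$ as a Steenrod closed parameter ideal in $H^*(BA_4)$. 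Combining the restricted list produced by \cite[Theorem~1.2]{ruepingstephanyalcin2022} with Adams' Hopf invariant one theorem should narrow $n+1$ down to $\{1,2,4,8\}$.

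For the if direction, I would construct explicit examples for $n\in\{0,1,3,7\}$ from the real normed division algebra $A$ of dimension $n+1$: the antipodal involutions on each factor of $A\times A$ restrict to a free $V_4$-action on $S(A)\times S(A)=S^n\times S^n$, and the multiplicative structure on $S^n$ supplies an order-$3$ automorphism of $S^n\times S^n$ cycling the three nontrivial elements of $V_4$, assembling into the required $A_4$-action. The main obstacle is the only-if direction of part (2); ruling out $n\in\{15,31,\dots\}$ requires input beyond the algebraic parameter-ideal classification in $\FF_2[a,b]$, and the natural route goes through Adams' theorem.
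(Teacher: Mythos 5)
Your treatment of part \eqref{item:Borelcohomology_determined} and your constructions for $n\in\{0,1,3,7\}$ match the paper's route in substance: the kernel $J$ of $H^*(BP)\to H^*(X/P)$ is a surjection's kernel and a Steenrod closed $C_3$-invariant parameter ideal with two parameters of degree $n+1$ (\cref{prop:Steenrodclosedparameteridealfromaction}), and since a Steenrod closed $C_3$-invariant parameter ideal with equal-degree parameters must be generated by an orbit (\cref{lem:parameterssamedegree}, via Oliver's lemma), \cref{thm:classification_ideals_from_orbits} forces $J=\langle a^{n+1},b^{n+1}\rangle$ with $n+1$ a power of two.

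The gap is in your ``only if'' direction of part \eqref{item:actiononproductspheres}, precisely at the step you yourself flag as the crux. Your proposed mechanism --- pass to $H^*(X/A_4)$ and read off its kernel as a Steenrod closed \emph{parameter} ideal of $H^*(BA_4)$ subject to the classification of \cite[Theorem~1.2]{ruepingstephanyalcin2022} --- fails, because that kernel is the restriction $J\cap H^*(BA_4)$ of $J=\langle a^{n+1},b^{n+1}\rangle$, and for these orbit-generated ideals the restriction is not a parameter ideal: already for $n=0$ it equals $H^{>0}(BA_4)$, which needs three generators (this is the paper's example following \cref{prop:crit_notinv_parameters}; more generally, \cref{prop:crit_notinv_parameters} together with \cref{rem:bijectionextensionrestriction} shows the orbit-generated ideals are exactly those \emph{not} obtained by extending a parameter ideal of $H^*(BA_4)$). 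Moreover, no amount of Steenrod-operation bookkeeping can exclude $n+1=16$, since $\langle a^{16},b^{16}\rangle$ genuinely is a Steenrod closed $C_3$-invariant parameter ideal by \cref{lem:poweroftwo_orbit}; the obstruction must come from topology. The input the paper uses instead is the following: by the equivalence of \eqref{it:actionspacesi} and \eqref{it:actionspacesiii} in \cref{thm:actionspaces}, the $C_3$-action on $H^n(X;\FF_2)$ is nontrivial, hence $H^n(X;\FF_2)$ is not a permutation module over $\FF_2[C_3]$, and the theorem of Schultz (see also Adem--Browder) then forces $n\in\{0,1,3,7\}$. Adams' Hopf invariant one theorem does underlie that result, so your instinct about the needed input is right, but ``combining the restricted list with Adams' theorem should narrow $n+1$ down'' supplies no argument; the essential step --- extracting the nontriviality of the $C_3$-action on $H^n$ and feeding it into the Schultz/Adem--Browder theorem --- is missing from your proposal.
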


The construction of the $P$-free $A_4$-actions on $S^n\times S^n$ in \cref{thm:actionspacesintro}\eqref{item:actiononproductspheres} is due to \cite{plaktha1996}. The necessity of $n\in \{0,1,3,7\}$ follows from \cite{schultz1971,adembrowder1988} and the full statement of \cref{thm:actionspaces}.

For general finite $A_4$-CW complexes with four-dimensional mod $2$ cohomology such that the restricted $P$-action is free, we provide extensive restrictions for the possible degrees of the nonzero cohomology groups in \cref{thm:degreesofparameters}. Similarly to the restrictions for free $A_4$-actions on products of two spheres from \cite[Theorem~1.4]{ruepingstephanyalcin2022}, we show that almost no degrees can occur in the following sense:

 \begin{theorem}[\cref{cor:percentageofpairs}]
    For $r\ge 0$, the percentage of those pairs $(m,n)$ with $m,n\le r$ such that there exists a finite $P$-free, $A_4$-CW complex whose mod $2$ cohomology is four-dimensional with homogeneous basis elements in degrees $0,m,n,d$ for some $d\ge m,n$ tends to zero as $r\to \infty$.
\end{theorem}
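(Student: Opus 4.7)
The plan is to reduce the asymptotic density statement to a counting argument based on the explicit restrictions on the possible degrees $(m,n)$ provided by \cref{thm:degreesofparameters}, combined with the classification of Steenrod closed $C_3$-invariant parameter ideals.

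First, I would unpack \cref{thm:degreesofparameters}: each finite $P$-free $A_4$-CW complex with four-dimensional mod~$2$ cohomology concentrated in degrees $0, m, n, d$ gives rise to a Steenrod closed $C_3$-invariant parameter ideal in $H^*(BP)\cong \FF_2[a,b]$, whose generating degrees are determined by $m$ and $n$. The classification (\cref{thm:classification_ideals_from_orbits} together with \cref{prop:crit_notinv_parameters} and \cite[Theorem~1.2]{ruepingstephanyalcin2022}) describes all such ideals as belonging to finitely many families, each indexed by one or two nonnegative integer parameters in such a way that at least one of these parameters either equals or determines an exponent $k$ controlling a generator of degree $2^k$ (or $2^k \pm 1$).

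Second, I would bound the number of pairs $(m,n)$ with $m,n\leq r$ that each family contributes. A family in which one of $m,n$ is forced to lie in a set of integers of the form $\{2^k + c\}_{k\geq 0}$ for a fixed constant $c$ contains at most $O(\log r)$ admissible values for that coordinate, and hence contributes $O(r\log r)$ pairs since the other coordinate ranges over at most $r+1$ values. Families in which $m$ and $n$ are further constrained by a discrete arithmetic relation (such as being equal up to a power of $2$, or both being shifted powers of $2$) contribute only $O((\log r)^2)$ pairs. Summing the contributions of the finitely many families yields at most $O(r\log r)$ admissible pairs.

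Finally, the percentage is at most $O(r\log r)/(r+1)^2 = O((\log r)/r)$, which tends to $0$ as $r\to\infty$, establishing the corollary. The main obstacle lies in extracting from \cref{thm:degreesofparameters} the correct family structure of admissible pairs $(m,n)$, and in particular verifying that the new family $\langle a^{2^k}, b^{2^k}\rangle$ identified in the present paper contributes only sparsely; but this is a bookkeeping task rather than a deep step, because every Steenrod closed $C_3$-invariant parameter ideal is ultimately controlled by a power of $2$, so sparseness of the set of admissible pairs is built in as soon as the classification is in hand.
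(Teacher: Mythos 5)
Your overall strategy is the same as the paper's: the proof is exactly the reduction to \cref{thm:degreesofparameters} followed by a count of the admissible pairs, and the paper simply cites \cite[Lemma~6.15]{ruepingstephanyalcin2022} for the density-zero statement for families \eqref{it:degreesofparametersi} and \eqref{it:degreesofparametersii} and adds that the new family \eqref{it:degreesofparametersiii} contributes only $O(\log r)$ pairs. One caveat on your counting sketch: your stated criterion (``one of $m,n$ is forced to lie in a set of the form $\{2^k+c\}$, hence $O(\log r)$ values for that coordinate'') does not apply to family \eqref{it:degreesofparametersi}, whose pairs are $(3k,2l)$ with $1\le k\le 2^{v_2(l)}$ --- here neither coordinate is confined to shifted powers of $2$, and both coordinates can take on the order of $r$ values. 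The sparseness of that family instead comes from the constraint tying $k$ to the $2$-adic valuation of $l$: the number of pairs with both entries at most $r$ is bounded by $\sum_{l\le r/2} 2^{v_2(l)} = O(r\log r)$, since about $L/2^{t+1}$ integers $l\le L$ have $v_2(l)=t$. With that correction your total of $O(r\log r)$ admissible pairs, hence density $O((\log r)/r)\to 0$, is right, and the argument goes through; this is essentially a self-contained re-derivation of the cited lemma rather than a different route.
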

 
The dimensions of the occurring nonzero homology groups in \cref{thm:actionspacesintro}\eqref{item:Borelcohomology_determined} not only determine the $P$-equivariant Borel cohomology of $X$, but also the homotopy type of its cellular chain complex over $\FF_2[A_4]$.
\begin{theorem}[\cref{thm:rigid}]\label{thm:intro:rigid}
Let $X$ and $Y$ be two finite $A_4$-CW complexes with four-dimensional cohomologies over $\FF_2$ such that $H^*(X;\FF_2)\cong H^*(Y;\FF_2)$ as graded vector spaces. If the restricted $P$-actions on $X$ and $Y$ are free, then the cellular cochain complexes of $X$ and $Y$ are homotopy equivalent over $\FF_2[A_4]$. 
\end{theorem}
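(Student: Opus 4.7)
The plan is to reduce the statement to the algebraic classification of perfect cochain complexes over $\FF_2[A_4]$ established in \cite[Theorem~1.1]{ruepingstephan2024}, which asserts that up to chain homotopy equivalence, a perfect complex with four-dimensional homology is determined by the pair $(d, I)$, where $d$ is the lowest degree of nonzero homology and $I$ is a $C_3$-invariant parameter ideal in $H^*(BP)\cong \FF_2[a,b]$. The task then is to show that both invariants of the cellular cochain complexes of $X$ and $Y$ are forced to agree under the hypotheses.

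The integer $d$ is manifestly a graded-vector-space invariant of $H^*(X;\FF_2)$, and therefore coincides for $X$ and $Y$. For the parameter ideals $I_X$ and $I_Y$, the hypothesis that the restricted $P$-actions are free enters via \cite[Lemma~8.1]{ruepingstephan2024}, which forces $I_X$ and $I_Y$ to be Steenrod closed. Combining \cite[Theorem~1.2]{ruepingstephanyalcin2022} on the $A_4$-invariant case with the new \cref{thm:classification_ideals_from_orbits} and \cref{prop:crit_notinv_parameters} of this paper produces an explicit, short list of candidates: any Steenrod closed $C_3$-invariant parameter ideal either extends an ideal from $H^*(BA_4)$, or has the form $\langle a^{2^k}, b^{2^k}\rangle$ for some $k\ge 0$; in the four-dimensional regime, the latter family contributes only $\langle a^2, b^2\rangle$. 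For each ideal $I$ on the list, the Hilbert series of $\FF_2[a,b]/I$ records the degrees, with multiplicities, in which the homology of the associated perfect complex is nonzero (shifted by $d$). Consequently, the graded-vector-space isomorphism $H^*(X;\FF_2)\cong H^*(Y;\FF_2)$ forces $I_X$ and $I_Y$ to have the same Hilbert series, and inspecting the list verifies that within total dimension four no two distinct Steenrod closed $C_3$-invariant parameter ideals share a Hilbert series, so $I_X = I_Y$ and the conclusion follows.

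The hard part will be the last uniqueness check: confirming that $\langle a^2, b^2\rangle$ does not collide in Hilbert series with any of the ideals extending those of \cite[Theorem~1.2]{ruepingstephanyalcin2022}, and that among those extended ideals no two distinct ones have matching generator degrees in the four-dimensional case. This reduces to a finite bookkeeping comparison of two short lists, which is routine but has to be executed carefully to rule out every possible coincidence.
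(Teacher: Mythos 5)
Your overall strategy is the paper's: reduce to the classification of perfect complexes over $\FF_2[A_4]$ by the pair $(d,I)$, note that $d$ is determined by the graded vector space $H^*(X;\FF_2)$, and then show that the Steenrod closed $C_3$-invariant parameter ideal $I$ is also determined. However, there is a concrete error in how you set up the final uniqueness check. You assert that ``in the four-dimensional regime'' the orbit-generated family contributes only $\langle a^2,b^2\rangle$. This conflates $H^*(X)$ with $H^*(X/P)\cong\FF_2[a,b]/I$: it is $H^*(X)$ that is four-dimensional, whereas $\dim_{\FF_2}\FF_2[a,b]/I=(m+1)(n+1)$ where $m+1,n+1$ are the parameter degrees. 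All of the ideals $\langle a^{2^k},b^{2^k}\rangle$ genuinely occur for four-dimensional $H^*(X)$ (e.g.\ $\langle a^4,b^4\rangle$ and $\langle a^8,b^8\rangle$ arise from the $P$-free $A_4$-actions on $S^3\times S^3$ and $S^7\times S^7$ of \cref{thm:oddaction0137}), so your candidate list is not the finite list you describe, and the ``routine bookkeeping'' you defer to is not actually finite as stated. Relatedly, your claim that the Hilbert series of $\FF_2[a,b]/I$ records the degrees of the homology of the perfect complex is literally false (those are two different graded vector spaces); the correct and needed statement, which is \cite[Proposition~4.3]{ruepingstephan2024} as quoted in \cref{prop:Steenrodclosedparameteridealfromaction}, is that the parameter degrees of $I$ are $m+1,n+1$ where $0\le m\le n\le d$ are the degrees of a homogeneous basis of $H^*(X)$.

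Once the invariant is corrected to ``the unordered pair of parameter degrees,'' the remaining content — which you label the hard part and leave unexecuted — is exactly where the paper does real work: one must show there is at most one Steenrod closed $C_3$-invariant parameter ideal with parameters of given degrees. The paper splits by whether the two degrees coincide. If they coincide, \cref{lem:parameterssamedegree} (whose proof rests on Oliver's lemma that Steenrod closed ideals of $H^*(BP)^{C_3}$ generated by equal-degree elements are principal) shows the ideal must be orbit-generated, and \cref{thm:classification_ideals_from_orbits} then pins it down to $\langle a^{2^k},b^{2^k}\rangle$; this same lemma is what guarantees the two families cannot collide in degrees. If the degrees differ, the ideal is an extension from $H^*(BA_4)$ and uniqueness is \cite[Corollary~6.13]{ruepingstephanyalcin2022}. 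You would need to supply both of these ingredients; neither is a finite case check.
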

\cref{thm:intro:rigid} generalizes \cite[Theorem~8.2]{ruepingstephan2024} from free $A_4$-actions to $P$-free $A_4$-actions.

\subsection*{Acknowledgments}
The research of Stephan was partially funded by the Deutsche Forschungsgemeinschaft (DFG, German Research Foundation) – Project-ID\\
491392403~–~TRR~358.

\section{Notation, conventions and recollection of invariant theory}\label{sec:notation}
We take ordinary cohomology with coefficients in $\FF_2$ and often suppress the coefficients from the notation.

We fix a generator $\varphi$ of the cyclic group $C_3$ of order $3$ and let it act on the Klein four-group $P\coloneqq \ZZ/2\times \ZZ/2$ by $\varphi(x,y) = (y,x+y)$ for $(x,y)\in P$. The alternating group $A_4$ is identified with the semidirect product $P\rtimes C_3$. For any finite group $Q$ acting on $P$, conjugation induces a $Q$-action on $H^*(BP)=\Ext^*_{\FF_2[P]}(\FF_2,\FF_2)$; see e.g. \cite[Section~2]{ruepingstephan2024}. An ideal in $H^*(BP)$ is \emph{$Q$-invariant}, if it closed under the $Q$-action.
For $Q=C_3$ and 
\[H^*(BP)\cong H^*(B\ZZ/2)\otimes_{\FF_2}H^*(B\ZZ/2)\cong \FF_2[a]\otimes_{\FF_2}[b]\cong \FF_2[a,b]\]
with $|a|=|b|=1$, the $C_3$-action on $H^*(BP)$ is determined by $\varphi(a)= b$ and $\varphi(b)=a+b$. 

We use the setting of \cite[Section~2]{ruepingstephanyalcin2022}. The group cohomology $H^*(BA_4)$ can be calculated as the invariant ring $H^*(BP)^{C_3}$; see \cite[Chapter~III, Theorem~1.3]{adem2013cohomology}, and is a unique factorization domain; see \cite[Theorem~2.11]{nakajima1982}. A homogeneous ideal $J\subset H^*(BP)$ or $J\subset H^*(BA_4)$ is a \emph{parameter ideal} if it is generated by two homogeneous elements of positive degrees such that $H^*(BP)/J$ is finite over $\FF_2$. In this case, we say the two generators form a system of parameters. In our situation, two homogeneous elements $x_1$, $x_2$ of positive degrees form a system of parameters if and only if they satisfy the following equivalent conditions by \cite[Lemma~2.6]{ruepingstephanyalcin2022}:
\begin{enumerate}
    \item $x_1$, $x_2$ is a regular sequence,
    \item $x_1$, $x_2$ are coprime.
\end{enumerate}

With an action of a finite group $G$ on a CW complex $X$, we mean that $X$ admits the structure of a $G$-CW complex.

\section{Two families of \texorpdfstring{$C_3$}{C3}-invariant parameter ideals}
We will provide equivalent criteria to determine whether a $C_3$-invariant parameter ideal of $H^*(BP)$ is an extension of a parameter ideal in $H^*(BA_4)=H^*(BP)^{C_3}$. Interest in these ideals arises from the following two results.

\begin{proposition}[{\cite[Proposition~4.3(1),(4) and Lemma~8.1]{ruepingstephan2024}}]\label{prop:Steenrodclosedparameteridealfromaction}
    Suppose that $A_4$ acts on a finite CW complex $X$ with four-dimensional total cohomology such that $P$ acts freely.
    Then the induced map $H^*(BP) \to H^*(X/P)$ is surjective and its kernel is a $C_3$-invariant Steenrod closed parameter ideal.
  
    Moreover, if $0\leq m\leq n\leq d$ are the degrees of four homogeneous basis elements of $H^*(X)$, then the parameters of the $C_3$-invariant Steenrod closed parameter ideal have degrees $m+1$, $n+1$.
\end{proposition}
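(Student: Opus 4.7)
My plan is to identify $H^*(X/P)$ with the Borel cohomology $H^*(X_{hP})$ and read off the properties of the kernel of the edge map from an analysis of the associated Serre spectral sequence.

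Because $P$ acts freely, the canonical map $X_{hP}=EP\times_P X\to X/P$ is a homotopy equivalence, so the projection $X_{hP}\to BP$ induces a ring map $\phi\colon H^*(BP)\to H^*(X/P)$. The Weyl quotient $A_4/P\cong C_3$ acts on $X/P$ via the residual $A_4$-action on $X$ and on $BP$ via the outer $C_3$-action on $P$; naturality makes $\phi$ equivariant, so $\ker\phi$ is $C_3$-invariant, and Steenrod closedness follows from naturality of Steenrod squares under continuous maps. Since $X/P$ is a finite CW complex, $H^*(X/P)$ is finite-dimensional over $\FF_2$, and therefore $H^*(BP)/\ker\phi$ is finite-dimensional; in $\FF_2[a,b]$ this forces $\ker\phi$ to have height $2$.

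Next I would analyze the cohomological Serre spectral sequence $E_2^{p,q}=H^p(BP;\mathcal{H}^q(X))\Rightarrow H^{p+q}(X/P)$. Since $P$ is a $2$-group acting on the $\FF_2$-vector space $H^q(X)$, the action is unipotent, and the $E_2$-page is assembled from copies of $H^*(BP)$ placed on the rows $q\in\{0,m,n,d\}$. The bottom row $q=0$ admits no outgoing nontrivial differentials, and the only incoming differentials are $d_{m+1}$, $d_{n+1}$, and $d_{d+1}$. Finite-dimensionality of the abutment forces $E_\infty^{p,q}=0$ for all $q>0$, which yields surjectivity of $\phi$ via the edge homomorphism.

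The main obstacle is to identify the generators of $\ker\phi$ as two classes in degrees $m+1$ and $n+1$. Basis elements of $H^m(X)$ and $H^n(X)$ transgress to elements $\tau_m\in H^{m+1}(BP)$ and $\tau_n\in H^{n+1}(BP)$ that lie in $\ker\phi$. To rule out that the $d$-dimensional class contributes an independent third generator, I would compare Poincar\'e series: once $\tau_m$, $\tau_n$ are shown to form a regular sequence, the quotient $H^*(BP)/(\tau_m,\tau_n)$ has total $\FF_2$-dimension $(m+1)(n+1)$, and matching this against the dimension of $H^*(X/P)$ forced by the spectral sequence analysis in the preceding step pins $\ker\phi$ down exactly. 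Carrying out this dimension count cleanly, without assuming a multiplicative splitting of $H^*(X)$ that would make $d_{d+1}$ visibly land in $(\tau_m,\tau_n)$ by the Leibniz rule, is the delicate point of the argument.
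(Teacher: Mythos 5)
First, note that the paper does not prove this proposition: it is imported verbatim from \cite[Proposition~4.3(1),(4) and Lemma~8.1]{ruepingstephan2024}, where the argument is algebraic, via the classification of perfect $\FF_2[A_4]$-cochain complexes with four-dimensional homology (one shows $\Ext^*_{\FF_2[P]}(\FF_2,C^*(X))$ is a cyclic $H^*(BP)$-module isomorphic to $H^*(BP)/J$ with $J$ a parameter ideal generated in degrees $m+1$, $n+1$, and that $H^*(X)$ is forced to be an exterior algebra on the two lower classes). Your Serre spectral sequence route is genuinely different, and its formal parts are fine: the identification $X/P\simeq X_{hP}$, the $C_3$-equivariance of $\phi$, Steenrod closedness of $\ker\phi$ by naturality, and height $2$ from finiteness of $H^*(X/P)$ all go through.

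However, the hard half of the statement is not actually established. (i) Surjectivity: finite-dimensionality of the abutment only forces $E_\infty^{p,q}=0$ for $p+q$ large; it does not by itself rule out a permanent cycle in, say, $E_\infty^{0,m}$, which would destroy surjectivity of the edge map. To exclude this you must use that each page is a finitely generated $H^*(BP)$-module with $H^*(BP)$-linear differentials and argue by Krull dimension that an undifferentiated copy of $H^*(BP)\cdot u$ in a row $q>0$ cannot be killed by the (too small) incoming rows, contradicting finiteness. You also silently upgrade ``unipotent'' local coefficients to trivial ones when you describe the rows as copies of $H^*(BP)$; triviality of the $P$-action on $H^*(X)$ is itself part of the cited result, not a hypothesis. (ii) For $n>m$ the class in $E_2^{0,n}$ need not transgress: the differential $d_{n-m+1}\colon E^{0,n}\to E^{n-m+1,m}$ can a priori be nonzero, so ``$\tau_n$'' is not defined without a further argument. (iii) The concluding Poincar\'e-series comparison requires knowing $\dim_{\FF_2}H^*(X/P)=(m+1)(n+1)$ and that the $d$-row contributes nothing outside $(\tau_m,\tau_n)$; you correctly flag this as the delicate point but do not close it, and without a multiplicative splitting of $H^*(X)$ this is exactly where the cited algebraic proof does its real work. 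As it stands the proposal proves $C_3$-invariance, Steenrod closedness, and that $\ker\phi$ contains a system of parameters, but not surjectivity nor that $\ker\phi$ is a parameter ideal with generators in degrees $m+1$, $n+1$.
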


Algebraically, the $C_3$-invariant parameter ideals classify the perfect cochain complexes over $\FF_2[A_4]$ with four-dimensional homology up to shifting.

\begin{theorem}[{\cite[Theorem~1.1]{ruepingstephan2024}}]\label{thm:classification}
    Let $Q$ be a finite group of odd order acting on $P=\ZZ/2\times \ZZ/2$ and $G=P\rtimes Q$.
    There is a bijection between quasi-isomor\-phism classes of perfect cochain complexes over $\FF_2[G]$ with four-dimensional homology and pairs $(l,J)$ of an integer $l$ and a $Q$-invariant parameter ideal $J$.
\end{theorem}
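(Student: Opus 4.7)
My plan is to exploit that $|Q|$ is invertible in $\FF_2$ to reduce to the non-equivariant case, then build a dictionary between perfect complexes over $\FF_2[P]$ with four-dimensional homology and parameter ideals in $H^*(BP)$, and finally upgrade this dictionary $Q$-equivariantly. Since $\gcd(|Q|,2)=1$, the category of $\FF_2[G]$-modules is equivalent to that of $\FF_2[P]$-modules equipped with a compatible $Q$-action, and this equivalence preserves perfectness and quasi-isomorphisms. It thus suffices to classify $Q$-equivariant perfect $\FF_2[P]$-cochain complexes $C^\bullet$ with $\dim_{\FF_2} H^*(C^\bullet)=4$.

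For the forward map $C^\bullet\mapsto (l,J)$, I would set $l$ to be the lowest cohomological degree in which the homology of $C^\bullet$ is nonzero. The graded ring $H^*(BP)=\Ext^*_{\FF_2[P]}(\FF_2,\FF_2)$ acts via Yoneda composition on the graded module $M^\bullet\coloneqq \Ext^*_{\FF_2[P]}(\FF_2,C^\bullet)$; define $J\subseteq H^*(BP)$ as the kernel of the induced ring map $H^*(BP)\to \End^*_{\FF_2}(M^\bullet)$, i.e., the annihilator of $M^\bullet$. Finite-dimensionality of $H^*(C^\bullet)$ together with perfectness of $C^\bullet$ forces $H^*(BP)/J$ to be finite over $\FF_2$, so $J$ is a parameter ideal, and $Q$-equivariance of $C^\bullet$ makes $J$ a $Q$-invariant ideal.

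For the inverse map $(l,J)\mapsto C^\bullet$, given $J=(f,g)$ with $f,g$ a regular sequence of positive degrees $m+1,n+1$, I would pick (using semi-simplicity of $\FF_2[Q]$) a $Q$-equivariant set of generators and assemble a Koszul-style complex of finitely generated free $\FF_2[P]$-modules with one-dimensional homology in each of the degrees $l$, $l+m$, $l+n$, $l+m+n$. Transporting across the equivalence from the first paragraph returns the desired perfect $\FF_2[G]$-complex, totalling to four-dimensional homology.

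The hard part is injectivity: two perfect complexes producing the same $(l,J)$ should be quasi-isomorphic over $\FF_2[G]$. My strategy is a minimal-model rigidity argument. Since $\FF_2[P]$ is a local Frobenius algebra, every perfect $\FF_2[P]$-complex has a minimal model that is unique up to isomorphism. I would show that the differentials of this minimal model are, up to change of basis, determined by $l$ together with the isomorphism class of the $H^*(BP)$-module $M^\bullet$, which in turn is determined by $J$. Upgrading the resulting uniqueness to a $Q$-equivariant statement uses semi-simplicity of $\FF_2[Q]$ once more, and I expect this last step, in particular the control of the $Q$-action on the minimal model, to require the most care.
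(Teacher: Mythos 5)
This statement is quoted from \cite[Theorem~1.1]{ruepingstephan2024} and is not proved in the present paper; the only in-paper content is the description of the forward map, namely the degree $l$ of the lowest nonzero homology group together with the annihilator $J$ of $\Ext^*_{\FF_2[P]}(\FF_2,C^*)$. Your forward map agrees with that description, and your reduction to $Q$-equivariant perfect $\FF_2[P]$-complexes using that $|Q|$ is odd is sound, so your set-up is consistent with the cited source.

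As a proof, however, the proposal has genuine gaps. First, finiteness of $H^*(BP)/J$ over $\FF_2$ does not make $J$ a parameter ideal in the sense used here: one must also show that $J$ is generated by \emph{two} homogeneous elements of positive degree (the ideal $\langle a^2,ab,b^2\rangle$ has finite quotient but is not a parameter ideal). This requires the structural fact that $\Ext^*_{\FF_2[P]}(\FF_2,C^*)$ is a cyclic $H^*(BP)$-module whose annihilator is a complete intersection, which is the substance of \cite[Proposition~4.3]{ruepingstephan2024} and is not supplied by your argument. Second, for surjectivity, the step ``pick a $Q$-equivariant set of generators and assemble a Koszul-style complex'' is exactly where the difficulty sits: \cref{prop:crit_inv_parameters} and \cref{prop:crit_notinv_parameters} of this paper show that a $Q$-invariant parameter ideal need not admit $Q$-invariant parameters (e.g.\ $\langle a,b\rangle=\langle C_3a\rangle$), and a Koszul complex built on a $Q$-stable generating \emph{set} (a three-element orbit) has the wrong homology; one must instead work with the two-dimensional representation $J/\langle a,b\rangle J$ and then realize the resulting algebraic data by an actual $Q$-equivariant complex of projective $\FF_2[P]$-modules, a nontrivial realization step your sketch does not address. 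Third, the injectivity argument is circular as stated: asserting that the differentials of the minimal model are ``determined by $l$ together with the isomorphism class of $M^\bullet$'' is a restatement of the theorem rather than a proof of it.
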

The bijection assigns to a perfect cochain complex $C^*$ with $\dim_{\FF_2} H^*(C^*)=4$ the degree $l$ of the lowest nonzero homology group and the annihilator ideal $J\subset H^*(BP)$ of the $H^*(BP)$-module $\Ext^*_{\FF_2[P]}(\FF_2,C^*)$. We are mainly interested in the case of $Q=C_3$, but also the trivial group $Q=1$.

The following result and \cref{prop:crit_notinv_parameters} split the set of $C_3$-invariant parameter ideals in two families.
\begin{proposition}\label{prop:crit_inv_parameters}
Let $C^*$ be a perfect $\FF_2[A_4]$-cochain complex with four-dimensional homology $H^*(C^*)$ and let $J\subset H^*(BP)$ be the annihilator ideal of $\Ext^*_{\FF_2[P]}(\FF_2,C^*)$.

The following statements are equivalent:
    \begin{enumerate}
        \item \label{it:crit_inv_parametersi} $J$ has one $C_3$-invariant parameter.
        \item \label{it:crit_inv_parametersii}The graded $\FF_2[C_3]$-representation $J/\langle a,b\rangle J$ is trivial.
        \item \label{it:crit_inv_parametersiii}We can find a system of $C_3$-invariant parameters $x,y$ for $J$.
        \item \label{it:crit_inv_parametersiv} $J$ is the extension of a parameter ideal $J'\subset H^*(BA_4)=H^*(BP)^{C_3}$.        
        \item \label{it:crit_inv_parametersv}The $C_3$-action on $H^*(C^*)$ is trivial.
    \end{enumerate}
\end{proposition}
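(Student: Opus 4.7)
My plan is to establish the algebraic equivalences $(i) \Leftrightarrow (ii) \Leftrightarrow (iii) \Leftrightarrow (iv)$ directly from the $\mathbb{F}_2[C_3]$-representation structure on $H^*(BP) = \mathbb{F}_2[a,b]$, and then to relate these to $(v)$ via the classification bijection of \cref{thm:classification}. The key structural input is that $\mathbb{F}_2[C_3]$ is semisimple (as $3$ is a unit modulo $2$), with exactly two irreducible modules: the trivial one-dimensional $\mathbb{F}_2$ and the faithful two-dimensional $\mathbb{F}_4$, the latter having no nonzero $C_3$-fixed vectors.

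For the algebraic equivalences, since $J$ is a parameter ideal in the two-dimensional polynomial ring $H^*(BP)$ it has exactly two minimal generators, so $J/\langle a,b\rangle J$ is a two-dimensional graded $\mathbb{F}_2[C_3]$-module. In each graded piece it is either one-dimensional (and thus $C_3$ acts trivially) or two-dimensional (so either trivial or $\mathbb{F}_4$); a nonzero $C_3$-invariant exists iff no graded piece is $\mathbb{F}_4$, which is equivalent to the full action being trivial. This gives $(i) \Leftrightarrow (ii)$. For $(ii) \Leftrightarrow (iii)$ I apply graded Nakayama together with semisimplicity: the surjection $J \twoheadrightarrow J/\langle a,b\rangle J$ is $\mathbb{F}_2[C_3]$-equivariant in each degree, so a $C_3$-invariant basis of the quotient lifts to $C_3$-invariant generators of $J$. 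The equivalence $(iii) \Leftrightarrow (iv)$ uses $H^*(BP)^{C_3} = H^*(BA_4)$; the regular-sequence property for invariant $x, y$ descends from $H^*(BP)$ to $H^*(BA_4)$ by a short argument: if $yz \in xH^*(BA_4)$ with $z$ invariant, the $H^*(BP)$-regular sequence property gives $z = xu$ for a unique $u \in H^*(BP)$, which is forced to be $C_3$-invariant by the $C_3$-invariance of $x$ and $z$.

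For the link to $(v)$, I prove $(iv) \Rightarrow (v)$ by exhibiting an explicit realization: given invariant generators $x, y \in H^*(BA_4)$, view them as classes in $H^*(A_4;\mathbb{F}_2)$ and build a perfect $\mathbb{F}_2[A_4]$-cochain complex $C^*_0$ via an iterated mapping-cone (Koszul-style) construction, with four-dimensional cohomology concentrated in degrees $0$, $|x|$, $|y|$, $|x|+|y|$, each summand a one-dimensional trivial $A_4$-representation. A direct computation identifies $\Ann_{H^*(BP)} \Ext^*_{\mathbb{F}_2[P]}(\mathbb{F}_2, C^*_0) = J$, and uniqueness in \cref{thm:classification} identifies $C^*$, up to shift, with $C^*_0$, forcing $(v)$. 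The main obstacle is the converse $(v) \Rightarrow (iv)$. My plan is first to observe that a trivial $C_3$-action on $H^*(C^*)$ actually forces the full $A_4$-action to be trivial: $C_3$-invariance of the $P$-action combined with the transitive $C_3$-action on the three non-identity elements of $P$ makes all three act as one common involution, which must be the identity since $P$ has exponent $2$. With $A_4$-trivial homology in hand, the hypercohomology spectral sequence $E_2^{p,q} = H^p(BP) \otimes H^q(C^*) \Rightarrow \Ext^{p+q}_{\mathbb{F}_2[P]}(\mathbb{F}_2, C^*)$ becomes $C_3$-equivariant through the $C_3$-action on $H^p(BP)$ alone; analyzing the $C_3$-equivariant multiplicative structure of its differentials (which are represented by classes in $H^*(BA_4)$) should then force $J$ to be generated by invariants. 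The delicate step here is ruling out that non-invariant elements are needed after passing to $E_\infty$, where a careful bookkeeping of the $\mathbb{F}_2[C_3]$-module structure across the spectral sequence pages is the crux of the argument.
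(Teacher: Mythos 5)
Your equivalences \eqref{it:crit_inv_parametersi}$\Leftrightarrow$\eqref{it:crit_inv_parametersii}$\Leftrightarrow$\eqref{it:crit_inv_parametersiii}$\Leftrightarrow$\eqref{it:crit_inv_parametersiv} are correct and essentially the paper's argument: the two-dimensionality of $J/\langle a,b\rangle J$, the fact that the nontrivial two-dimensional $\FF_2[C_3]$-representation has no nonzero fixed vectors, lifting an invariant basis through graded Nakayama (your appeal to semisimplicity is the same device as the paper's Reynolds operator), and descent of the parameter-ideal property to $H^*(BA_4)$ (the paper cites a lemma for this; your regular-sequence argument is a reasonable substitute).

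The genuine gap is the link to \eqref{it:crit_inv_parametersv}, specifically the direction \eqref{it:crit_inv_parametersv}$\Rightarrow$\eqref{it:crit_inv_parametersiv}. Your preliminary observation is fine: if $C_3$ acts trivially on $H^*(C^*)$, then the three involutions of $P$ are conjugate under $C_3$, hence act by a common map $\sigma$ with $\sigma^2=\sigma$, so the whole $A_4$-action on $H^*(C^*)$ is trivial. But the remaining step --- that the $C_3$-equivariant hypercohomology spectral sequence for $\Ext^*_{\FF_2[P]}(\FF_2,C^*)$ ``should then force $J$ to be generated by invariants'' --- is exactly the content that needs proving, and you explicitly leave it open (``ruling out that non-invariant elements are needed after passing to $E_\infty$ \dots is the crux''). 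Equivariance of the pages does not by itself produce invariant generators of the annihilator ideal: one needs an identification of the minimal generators $J/\langle a,b\rangle J$ with a $C_3$-subquotient of $H^*(C^*)$, and that is precisely what the paper imports from \cite[Proposition~4.3]{ruepingstephan2024}, which gives a $C_3$-equivariant isomorphism $H^*(\Sigma^{-l}C^*)\cong \Lambda\bigl(\Sigma^{-1}(J/\langle a,b\rangle J)\bigr)$ of graded representations and hence both directions of \eqref{it:crit_inv_parametersv}$\Leftrightarrow$\eqref{it:crit_inv_parametersii} at once. Without that (or a completed version of your spectral-sequence bookkeeping), the hard direction is unproved; your other direction \eqref{it:crit_inv_parametersiv}$\Rightarrow$\eqref{it:crit_inv_parametersv} via an explicit Koszul realization plus uniqueness in \cref{thm:classification} is a workable alternative, but it too is only sketched, since it requires constructing the perfect complex and computing its annihilator ideal.
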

\begin{proof}
First, we want to show that \eqref{it:crit_inv_parametersi} implies \eqref{it:crit_inv_parametersii}.
Let $x$ be a $C_3$-invariant parameter of $J$. Note that the $\FF_2[C_3]$ representation $J/\langle a,b\rangle J$ is two-dimensional. Thus it is either the direct sum of two trivial irreducible representations or the non-trivial two-dimensional representation. It cannot be the latter, since $[x]\in J/\langle a,b\rangle J$ is fixed.

Next, we show that \eqref{it:crit_inv_parametersii} implies \eqref{it:crit_inv_parametersiii}. Pick representatives $x_1,x_2$ of a homogeneous basis of $J/\langle a,b\rangle J$. Applying the Reynolds operator $\mathcal R(z)\coloneqq 1/3\sum_{q \in C_3} qz$ to $x_1,x_2$ yields $C_3$-invariant representatives of the same basis elements, since the $C_3$-action on $J/\langle a,b\rangle J$ is trivial. These generate $J$ by the graded Nakayama lemma and since $J$ is a parameter ideal, they form a system of parameters.

We show that \eqref{it:crit_inv_parametersiii} implies \eqref{it:crit_inv_parametersiv}. Next, suppose that $x_1,x_2$ are two $C_3$-invariant parameters for $J$. Define $J'\subset H^*(BA_4)=H^*(BP)^{C_3}$ as the ideal generated by $x_1,x_2$. By construction, $J$ is the extension of $J'$. In particular, $J'$ is a parameter ideal by \cite[Lemma~2.3]{ruepingstephanyalcin2022}.

To show that \eqref{it:crit_inv_parametersiv} implies \eqref{it:crit_inv_parametersi}, suppose that $J$ is the extension of a parameter ideal $J'\subset H^*(BP)^{C_3}$. Any system of parameters for $J'$ is a system of $C_3$-invariant parameters for $J$.

Finally, we establish the equivalence of  \eqref{it:crit_inv_parametersv} and \eqref{it:crit_inv_parametersii}. Let $l$ be the degree of the lowest non-zero homology group of $C^*$. By \cite[Proposition~4.3]{ruepingstephan2024} and since one-dimensional representations over $\FF_2$ are trivial, the cohomology of $\Sigma^{-l}C^*$ is isomorphic to the exterior algebra $\Lambda (\Sigma^{-1} (J/\langle a,b\rangle J))$ on the shifted graded module $\Sigma^{-1} (J/\langle a,b\rangle J)$ as graded $\FF_2[C_3]$-representations. Hence, the action on homology is trivial if and only if the graded $C_3$-representation $J/\langle a,b\rangle J$ is trivial.
\end{proof}
Our goal is to classify all Steenrod closed $C_3$-invariant parameter ideals in the group cohomology $H^*(BP)$. Previous joint work with Erg\"un Yal{\c c}{\i}n provides the answer for the ideals from \cref{prop:crit_inv_parameters}.

\begin{remark}\label{rem:bijectionextensionrestriction}
    Extension and restriction induce a bijection between the set of parameter ideals $J'$ in $H^*(BA_4)$ and the set of $C_3$-invariant parameter ideals $J$ in $H^*(BP)$ for which the equivalent conditions from \cref{prop:crit_inv_parameters} hold; see \cite[Lemma~2.3]{ruepingstephanyalcin2022}. Moreover, $J'$ is closed under Steenrod operations if and only if its extension is Steenrod closed by \cite[Lemma~2.8]{ruepingstephanyalcin2022}.

    The Steenrod closed parameter ideals in $H^*(BA_4)$ have been classified in \cite[Theorem~1.2]{ruepingstephanyalcin2022}.    
\end{remark}

In \cref{sec:steenrod_closed_parameter_ideals} we will classify the remaining Steenrod closed $C_3$-invariant parameters ideals of $H^*(BP)$. These are the Steenrod closed $C_3$-invariant parameter ideals for which the conditions of \cref{prop:crit_inv_parameters} do not hold. Recall the fixed generator $\varphi$ of $C_3$ from \cref{sec:notation}.

\begin{proposition}\label{prop:crit_notinv_parameters} Let $J\subset H^*(BP)$ be a homogeneous ideal. The following statements are equivalent:
    \begin{enumerate}
        \item \label{it:crit_notinv_parametersi} $J$ is a $C_3$-invariant parameter ideal such that none of the equivalent conditions from $\cref{prop:crit_inv_parameters}$ hold.
        \item \label{it:crit_notinv_parametersii}$J$ is nonzero and for every nonzero homogeneous element $x\in J$ of smallest degree, we have $x+\varphi(x)+\varphi^2(x)=0$, the elements $x,\varphi(x)$ are coprime, and $\langle x, \varphi(x)\rangle=J$.
        \item \label{it:crit_notinv_parametersiii} $J$ is a parameter ideal and generated by the $C_3$-orbit of a homogeneous element $x\in H^*(BP)$.
    \end{enumerate}
\end{proposition}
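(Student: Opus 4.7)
The plan is to prove the cycle of implications $\eqref{it:crit_notinv_parametersi}\Rightarrow\eqref{it:crit_notinv_parametersii}\Rightarrow\eqref{it:crit_notinv_parametersiii}\Rightarrow\eqref{it:crit_notinv_parametersi}$. First I would record two preliminaries. Since $|C_3|=3$ is invertible in $\FF_2$, the group algebra $\FF_2[C_3]$ is semisimple with exactly two irreducibles: the trivial character, and a two-dimensional irreducible (concretely $\FF_4$ with $\varphi$ acting as a primitive cube root of unity) on which $1+\varphi+\varphi^2=0$ and on which $v,\varphi(v)$ is a basis for every $v\neq 0$. Second, since $\FF_2[a,b]$ has Krull dimension $2$, a parameter ideal $J$ requires exactly two minimal generators, so $\dim_{\FF_2}J/\langle a,b\rangle J=2$ by graded Nakayama.

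For $\eqref{it:crit_notinv_parametersi}\Rightarrow\eqref{it:crit_notinv_parametersii}$, the failure of condition \eqref{it:crit_inv_parametersii} of \cref{prop:crit_inv_parameters} forces the $2$-dimensional $\FF_2[C_3]$-module $J/\langle a,b\rangle J$ to be the non-trivial irreducible. Let $x\in J$ be nonzero and homogeneous of minimal degree $d$. Every nonzero element of $\langle a,b\rangle J$ has degree at least $d+1$, so $[x]\in J/\langle a,b\rangle J$ is nonzero; in the non-trivial irreducible this yields $[x]+\varphi[x]+\varphi^2[x]=0$. The same degree bound upgrades this to $x+\varphi(x)+\varphi^2(x)=0$ in $H^*(BP)$, since $x+\varphi(x)+\varphi^2(x)$ is homogeneous of degree $d$ and lies in $\langle a,b\rangle J$. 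As $[x],[\varphi(x)]$ form a basis of $J/\langle a,b\rangle J$, graded Nakayama gives $J=\langle x,\varphi(x)\rangle$, so $x,\varphi(x)$ is a system of parameters, and coprimality follows from \cite[Lemma~2.6]{ruepingstephanyalcin2022}.

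For $\eqref{it:crit_notinv_parametersii}\Rightarrow\eqref{it:crit_notinv_parametersiii}$, the identity $\varphi^2(x)=x+\varphi(x)$ shows $J=\langle x,\varphi(x),\varphi^2(x)\rangle$, so $J$ is generated by the $C_3$-orbit of $x$; coprimality of $x,\varphi(x)$ then makes $J$ a parameter ideal. For $\eqref{it:crit_notinv_parametersiii}\Rightarrow\eqref{it:crit_notinv_parametersi}$, $C_3$-invariance is immediate from closure of the orbit, and it suffices to refute condition \eqref{it:crit_inv_parametersii} of \cref{prop:crit_inv_parameters}. The three orbit classes generate $J/\langle a,b\rangle J$, so if the $C_3$-action on this quotient were trivial all three would coincide, forcing $\dim_{\FF_2} J/\langle a,b\rangle J\leq 1$ and contradicting the preliminary dimension count.

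The main obstacle I anticipate is the promotion in $\eqref{it:crit_notinv_parametersi}\Rightarrow\eqref{it:crit_notinv_parametersii}$ of the congruence $[x]+\varphi[x]+\varphi^2[x]=0$ modulo $\langle a,b\rangle J$ to the equality $x+\varphi(x)+\varphi^2(x)=0$ in $H^*(BP)$. The trick is to choose $x$ of minimal degree $d$, so that $\langle a,b\rangle J$ contains no nonzero homogeneous elements of degree $d$ and the congruence is automatically an equality.
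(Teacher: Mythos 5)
Your proposal is correct and follows essentially the same route as the paper: identify $J/\langle a,b\rangle J$ as the two-dimensional nontrivial $\FF_2[C_3]$-representation, use minimality of the degree of $x$ to promote the relation $x+\varphi(x)+\varphi^2(x)\equiv 0$ modulo $\langle a,b\rangle J$ to an actual equality, and apply graded Nakayama plus \cite[Lemma~2.6]{ruepingstephanyalcin2022} for the generation and coprimality claims. The only cosmetic differences are that you make the semisimplicity of $\FF_2[C_3]$ and the $\FF_4$-description of the nontrivial irreducible explicit, and that your refutation of triviality in $\eqref{it:crit_notinv_parametersiii}\Rightarrow\eqref{it:crit_notinv_parametersi}$ is phrased via a dimension count rather than via cyclic generation of the module; these are equivalent to the paper's arguments.
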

\begin{proof}
First, we show that \eqref{it:crit_notinv_parametersi} implies \eqref{it:crit_notinv_parametersii}. By assumption, \cref{prop:crit_inv_parameters}\eqref{it:crit_inv_parametersii} does not hold. Thus, $J/\langle a,b\rangle J$ has to be the nontrivial, two-dimensional $C_3$-representation, whose only fixed element is $0$. Let $x\in J$ be a nonzero homogeneous element  of smallest degree. Then the elements $x$ and $\varphi(x)$ represent different, nonzero elements in $J/\langle a,b\rangle J$. Hence, their classes are a basis of the two-dimensional $\FF_2$-vector space $J/\langle a,b\rangle J$. By the graded Nakayama lemma, they generate $J$ and since $J$ is a parameter ideal, they are coprime; see \cite[Lemma~2.6]{ruepingstephanyalcin2022}. Every element $z$ of the nontrivial two-dimensional $C_3$-representation satisfies $z+\varphi(z)+\varphi^2(z)=0$. Hence $x+\varphi(x)+\varphi^2(x)$ is zero in $J/\langle a,b\rangle J$ and for degree reasons it is zero in $J$ as well.

Next, we show that \eqref{it:crit_notinv_parametersii} implies \eqref{it:crit_notinv_parametersiii}. Let $x\in J$ be a nonzero element of smallest degree. Then $J=\langle x,\varphi(x)\rangle =\langle C_3x\rangle$ using the assumption $x+\varphi(x)+\varphi^2(x)=0$. Since $x$ and $\varphi(x)$ are coprime, they form a system of parameters; see, e.g. \cite[Lemma~2.6]{ruepingstephanyalcin2022}.

Finally, we establish that \eqref{it:crit_notinv_parametersiii} implies \eqref{it:crit_notinv_parametersi}. Since the parameter ideal $J$ is generated by a $C_3$-orbit of $x\in H^*(BP)$, the two-dimensional representation $J/\langle a,b\rangle J$ is generated by $[x]$ as an $\FF_2[C_3]$-module and hence is not the trivial representation.
\end{proof}
\begin{example} The ideal $\langle C_3a\rangle\subset H^*(BP)$ is a $C_3$-invariant parameter ideal. Its restriction to $H^*(BA_4)$ is $H^{>0}(BA_4)$ which is not a parameter ideal.
\end{example}

\section{Steenrod closed parameter ideals}\label{sec:steenrod_closed_parameter_ideals}
The total Steenrod operation on $H^*(BP)\cong \FF_2[a,b]$ is the ring homomorphism $\Sq\colon \FF_2[a,b]\to \FF_2[a,b]$ with $\Sq(a)=a+a^2$, $\Sq(b)=b+b^2$. It restricts to the total Steenrod operation on $H^*(BA_4)$. We say that a homogeneous ideal $J\subset H^*(BP)$ or $J\subset H^*(BA_4)$ is \emph{Steenrod closed} if it is closed under the total Steenrod operation.

In this section, we classify the $C_3$-invariant Steenrod closed parameter ideals in $H^*(BP)$ that are not extensions of Steenrod closed parameter ideals in $H^*(BA_4)$.
\begin{lemma}\label{lem:parameterssamedegree} A $C_3$-invariant Steenrod closed parameter ideal $J\subset H^*(BP)$ is generated by an orbit if and only if it has parameters of the same degree. 
\end{lemma}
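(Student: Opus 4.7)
The plan is to prove the two directions separately. The forward direction is immediate from \cref{prop:crit_notinv_parameters}: if $J$ is generated by the $C_3$-orbit of a homogeneous $x\in H^*(BP)$, then $x$ and $\varphi(x)$ are coprime generators of $J$, hence form a system of parameters, and they share the degree $|x|$ since $\varphi$ preserves degrees.

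For the backward direction, I plan the following. Suppose $J$ has a system of parameters $x, y$ with $|x|=|y|=d$. Since $J$ is a height-two parameter ideal, any two-element generating set is a minimal generating set; in particular, $J/\langle a,b\rangle J$ is two-dimensional and concentrated in degree $d$. As an $\FF_2[C_3]$-module it is either trivial or the unique nontrivial two-dimensional representation. If it is nontrivial, \cref{prop:crit_notinv_parameters} immediately yields that $J$ is generated by a $C_3$-orbit, finishing this case.

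It then remains to rule out the trivial case. By \cref{prop:crit_inv_parameters}, $J$ is the extension of a parameter ideal $J'\subset H^*(BA_4)$, and applying the Reynolds operator to $x, y$ (as in the proof of \eqref{it:crit_inv_parametersii}$\Rightarrow$\eqref{it:crit_inv_parametersiii} of \cref{prop:crit_inv_parameters}) produces $C_3$-invariant elements $u, v\in H^*(BA_4)$ in degree $d$ that generate $J$, and therefore also generate $J'$. Since $J'$ has height two, $u, v$ must be a minimal generating set of $J'$. Combined with \cref{rem:bijectionextensionrestriction}, we would obtain a Steenrod closed parameter ideal of $H^*(BA_4)$ whose two minimal generators share the degree $d$.

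The concluding step of the plan is to appeal to the classification of Steenrod closed parameter ideals in $H^*(BA_4)$ from \cite[Theorem~1.2]{ruepingstephanyalcin2022} and observe, by inspection of that list, that the two minimal generators of every such ideal have distinct degrees; this contradicts the above and rules out the trivial case. The main obstacle is precisely this last inspection, i.e., making sure the previously known classification has the required degree property. A direct Steenrod-operation argument bypassing the classification—showing that $\langle u, v\rangle$ cannot be Steenrod closed whenever $u, v$ are coprime $C_3$-invariants of the same degree in $H^*(BA_4)$—also appears feasible (for instance, for the first candidate $\langle\sigma_3, c\rangle$ in degree three one computes $\Sq^1(c)=\sigma_2^2$, which is not in the ideal), but handling all degrees uniformly looks considerably more involved than citing the classification.
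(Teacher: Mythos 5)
Your argument is correct and follows the same skeleton as the paper's proof: the forward direction via \cref{prop:crit_notinv_parameters}, and the backward direction by passing (in the case where $J/\langle a,b\rangle J$ is a trivial representation) to the restricted parameter ideal $J'\subset H^*(BA_4)$, which is Steenrod closed with two invariant parameters of equal degree, and deriving a contradiction there. The only real divergence is the final ingredient. The paper quotes Oliver's \cite[Lemma~1]{oliver1979}: every Steenrod closed ideal of $H^*(BP)^{C_3}$ generated by homogeneous elements of a single degree is principal. This contradicts $J'$ being a parameter ideal in one line, with no case analysis. You instead invoke the full classification \cite[Theorem~1.2]{ruepingstephanyalcin2022} and inspect that every ideal on that list has minimal generators of distinct degrees. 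That inspection does go through (in the family $(3k,2l)$ with $k\le 2^t$, $2^t\Vert l$, equality $3k=2l$ would force $k\ge 2^{t+1}$; in the family $(3i+2^{s+r+1}-2^{s+1},\,2^{s+r+1}-2^s)$ equality would force $3i=2^s$), but you correctly identify it as the step needing verification, and it is a heavier tool than necessary: Oliver's lemma applies to \emph{all} equal-degree Steenrod closed ideals, so one never needs to know what the Steenrod closed parameter ideals actually are. Your proposed alternative of a direct Steenrod-operation computation would essentially be reproving Oliver's lemma. Everything else in your write-up (the use of \cref{rem:bijectionextensionrestriction} to transfer Steenrod closedness and generators between $J$ and $J'$, and the dichotomy on the representation $J/\langle a,b\rangle J$) matches the paper.
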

\begin{proof}
By \cref{prop:crit_notinv_parameters}, any parameter ideal generated by an orbit has parameters of the same degree. If a $C_3$-invariant parameter ideal $J$ is not generated by an orbit, then it is the extension of a parameter ideal $J'\subset H^*(BP)^{C_3}$ by \cref{prop:crit_notinv_parameters} and \cref{prop:crit_inv_parameters}. If in addition $J$ is Steenrod closed, then so is $J'$ by \cref{rem:bijectionextensionrestriction}. Then $J'$ can not have parameters of the same degree as Oliver showed in \cite[Lemma~1]{oliver1979} that all Steenrod closed ideals of $H^*(BP)^{C_3}$ generated by elements of the same degree are principal ideals. The parameters for $J'$ are also parameters for $J$, therefore $J$ has parameters of different degrees.
\end{proof}

We recall the (generalized) Kameko maps from \cite[Definition~3.6]{ruepingstephanyalcin2022},\cite[Definition~1.6.2]{walkerwood2018vol1}.
\begin{definition} Any element $x$ in $H^*(BP)\cong \FF_2[a,b]$ can be uniquely written in the form 
$$x=\kappa_1(x)^2+\kappa_a(x)^2a+\kappa_b(x)^2b+\kappa_{ab}(x)^2ab$$ for some elements $\kappa_1(x),\kappa_a(x),\kappa_b(x),\kappa_{ab}(x)$. The maps $\kappa_*$ for $\ast\in \{1,a,b,ab\}$ are called \emph{Kameko maps}.
\end{definition} 

\begin{remark} The Kameko maps are additive and satisfy $\kappa_*(xz^2)=\kappa_*(x)z$ for any elements $x,z\in \FF_2[a,b]$. The maps $\kappa_a$ and $\kappa_b$ vanish on homogeneous elements of even degree. The maps $\kappa_1$ and $\kappa_{ab}$ vanish on homogeneous elements of odd degree.
\end{remark}
We use the following notation.
\begin{definition}
For a subset $S\subset H^*(BP)$, we define $S^{[2]}$ to be the set $\{s^2\mid s\in S\}$.
\end{definition}

\begin{lemma}\label{lem:interestingsquared}
    An orbit $S$ in $H^*(BP)$ generates a Steenrod closed parameter ideal if and only if $S^{[2]}$ does.
\end{lemma}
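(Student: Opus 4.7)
The plan is to handle the parameter condition and the Steenrod closed condition separately. As a preliminary observation, if the orbit $S$ has only one element (i.e., $x$ is $C_3$-invariant), then both $\langle S\rangle$ and $\langle S^{[2]}\rangle$ are principal in the two-dimensional ring $\FF_2[a,b]$, hence of height at most one, so neither is a parameter ideal and the equivalence holds vacuously. Therefore I may assume $S=\{x,\varphi(x),\varphi^2(x)\}$ has three elements, in which case $S^{[2]}$ is the $C_3$-orbit of $x^2$ because $\varphi$ is a ring automorphism.

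For the parameter condition, \cref{prop:crit_notinv_parameters} characterizes orbit-generated parameter ideals by the requirements $x+\varphi(x)+\varphi^2(x)=0$ and $\gcd(x,\varphi(x))=1$. The first is invariant under squaring in characteristic two, since Frobenius is a ring homomorphism and is injective. The second is invariant in the UFD $\FF_2[a,b]$ because $\gcd(f^2,g^2)=\gcd(f,g)^2$. Hence $\langle S\rangle$ is a parameter ideal if and only if $\langle S^{[2]}\rangle$ is.

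For the Steenrod closed condition, pick two generators $s_1,s_2\in S$ of $\langle S\rangle$; then $s_1^2,s_2^2$ generate $\langle S^{[2]}\rangle$. Since $\Sq$ is a ring homomorphism, $\Sq(s^2)=\Sq(s)^2$, so it suffices to show that $\Sq(s)\in\langle s_1,s_2\rangle$ if and only if $\Sq(s)^2\in\langle s_1^2,s_2^2\rangle$ for each $s\in S$. The forward direction is the computation $(h_1 s_1+h_2 s_2)^2=h_1^2 s_1^2+h_2^2 s_2^2$ in characteristic two. The reverse direction is the main obstacle and reduces to the following key lemma: for arbitrary $y,s_1,s_2\in\FF_2[a,b]$, one has $y^2\in\langle s_1^2,s_2^2\rangle$ if and only if $y\in\langle s_1,s_2\rangle$.

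To prove this key lemma I plan to use the Kameko decomposition, which writes every element as $h=\kappa_1(h)^2+\kappa_a(h)^2 a+\kappa_b(h)^2 b+\kappa_{ab}(h)^2 ab$ and thereby exhibits $\FF_2[a,b]$ as a free module over the subring $\FF_2[a^2,b^2]$ of squares with basis $\{1,a,b,ab\}$. Given an expression $y^2=f_1 s_1^2+f_2 s_2^2$, decompose $f_i=f_{i,1}+f_{i,a}a+f_{i,b}b+f_{i,ab}ab$ with $f_{i,\ast}\in\FF_2[a^2,b^2]$; since $y^2$, $s_1^2$, and $s_2^2$ all lie in $\FF_2[a^2,b^2]$, matching the $1$-component in the basis decomposition yields $y^2=f_{1,1}s_1^2+f_{2,1}s_2^2$. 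Because the Frobenius $h\mapsto h^2$ is a ring isomorphism $\FF_2[a,b]\xrightarrow{\sim}\FF_2[a^2,b^2]$, we can write $f_{i,1}=g_i^2$ and apply its inverse to obtain $y=g_1 s_1+g_2 s_2\in\langle s_1,s_2\rangle$, closing the argument.
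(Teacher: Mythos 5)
Your proof is correct and follows essentially the same route as the paper: the parameter condition is handled via \cref{prop:crit_notinv_parameters} together with injectivity of Frobenius and the UFD property, and the Steenrod condition by squaring in one direction and the Kameko decomposition in the other. Your ``key lemma'' that $y^2\in\langle s_1^2,s_2^2\rangle$ iff $y\in\langle s_1,s_2\rangle$, proved by viewing $\FF_2[a,b]$ as a free $\FF_2[a^2,b^2]$-module with basis $\{1,a,b,ab\}$ and extracting the $1$-component, is exactly the paper's step of applying $\kappa_1$ to the equation $\Sq(x^2)=\alpha x^2+\beta\varphi(x^2)$, just packaged as a standalone statement.
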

\begin{proof}
We apply \cref{prop:crit_notinv_parameters} to show that an orbit $S=C_3x$ of a homogeneous element $x\in H^*(BP)$ of positive degree generates a parameter ideal if and only if $S^{[2]}$ does. In characteristic two we have $(x+\varphi(x)+\varphi^2(x))^2=x^2+\varphi(x^2)+\varphi^2(x^2)$. It follows that $x+\varphi(x)+\varphi^2(x)=0$ if and only if $x^2+\varphi(x^2)+\varphi^2(x^2)=0$, because $H^*(BP)$ has no zero-divisors. Since $\FF[a,b]$ is a unique factorization domain, the elements $x,\varphi(x)$ are coprime if and only if $x^2,\varphi(x)^2$ are coprime. 

We are left to show that the parameter ideal $\langle x, \varphi(x)\rangle$ generated by an orbit $C_3x$ is Steenrod closed if and only if the ideal $\langle x^2, \varphi(x)^2\rangle$ is Steenrod closed. Since $\langle x,\varphi(x)\rangle =\langle \varphi(x),\varphi^2(x)\rangle$, it suffices to show that $\Sq(x)\in \langle x,\varphi(x)\rangle$ if and only if $\Sq(x^2)\in \langle x^2,\varphi(x^2)\rangle$.
First, suppose that $\Sq(x) = \lambda x+\mu \varphi(x)$.
    Since the total Steenrod square and squaring are ring homomorphisms, we get
    $$\Sq(x^2)=\Sq(x)^2=(\lambda x+\mu \varphi(x))^2=\lambda^2 x^2+\mu^2 \varphi(x^2).$$ 

    Conversely, assume that $\Sq(x^2)=\alpha x^2+\beta \varphi(x^2)$.
    Apply $\kappa_1$ to obtain $\Sq(x) = \kappa_1(\alpha)x+\kappa_1(\beta)\varphi(x)$.    
\end{proof}

\begin{lemma}\label{lem:poweroftwo_orbit}
    Let $n$ be a power of two. Then the $C_3$-orbit of $a^n$ generates a Steenrod closed parameter ideal.
\end{lemma}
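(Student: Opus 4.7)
The plan is to induct on $k$ where $n=2^k$, with \cref{lem:interestingsquared} providing the inductive step essentially for free.

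For the base case $k=0$, the orbit $C_3 a=\{a,b,a+b\}$ generates the maximal homogeneous ideal $\langle a,b\rangle$. I would verify the conditions of \cref{prop:crit_notinv_parameters}\eqref{it:crit_notinv_parametersii}: the three orbit elements sum to $0$ in characteristic $2$, and $a,b$ are coprime in the unique factorization domain $\FF_2[a,b]$. Steenrod closure is immediate because $\Sq(a)=a+a^2$ and $\Sq(b)=b+b^2$ both lie in $\langle a,b\rangle$.

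For the inductive step, suppose $C_3 a^{2^k}$ generates a Steenrod closed parameter ideal. Since $\varphi$ is a ring automorphism of $\FF_2[a,b]$, squaring commutes with the $C_3$-action, so the set $(C_3 a^{2^k})^{[2]}$ coincides with $C_3 a^{2^{k+1}}$. Applying \cref{lem:interestingsquared} to $S=C_3 a^{2^k}$ then yields that $C_3 a^{2^{k+1}}$ also generates a Steenrod closed parameter ideal, completing the induction.

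I do not foresee a real obstacle here: the substantive work has been packaged into \cref{lem:interestingsquared}, so all that remains is the base case and the elementary observation that squaring the orbit of $a^n$ produces the orbit of $a^{2n}$. An equally short direct alternative would bypass the induction: for any power of two $n$, the freshman's dream gives $(a+b)^n=a^n+b^n$, so the orbit sum vanishes, and $\Sq(a^n)=(a+a^2)^n=a^n(1+a)^n=a^n(1+a^n)\in\langle a^n\rangle$, verifying the conditions of \cref{prop:crit_notinv_parameters} directly.
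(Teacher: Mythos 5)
Your proof is correct and takes essentially the same route as the paper: the paper likewise invokes \cref{lem:interestingsquared} to reduce to the base case $n=1$, where the orbit generates $\langle a,b\rangle$, which is visibly Steenrod closed; your induction just spells out that reduction, together with the (correct) observation that $(C_3a^{2^k})^{[2]}=C_3a^{2^{k+1}}$ because squaring commutes with $\varphi$. The direct alternative you sketch at the end is also valid and self-contained, but it is not needed.
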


\begin{proof}
By \cref{lem:interestingsquared}, it suffices to consider the case of $n=1$. Then $\langle C_3 a\rangle$ is the Steenrod closed parameter ideal $\langle a, b\rangle\subset \FF_2[a,b]$.
\end{proof}

The main goal of this section is to show that these are all orbits which generate Steenrod closed parameter ideals. The following technical lemma allows us to identify a specific element of an orbit that generates a parameter ideal. 

\begin{lemma}\label{lem:wloghighestsummands}
Assume that an orbit $S\subset H^n(BP)$ generates a parameter ideal. Let $p:H^n(BP)\to \FF_2\oplus \FF_2$ be the $\FF_2$-linear map sending $\sum_i \lambda_i a^ib^{n-i}$ to $(\lambda_n,\lambda_0)$. Then there exists a unique $v\in S$ such that $p(v)=(1,0)$. For this $v$, we obtain $p(\varphi(v))=(0,1)$ and $p(\varphi^2(v))=(1,1)$.
\end{lemma}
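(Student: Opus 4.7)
The plan is to reduce the statement to a short calculation in $\FF_2^3$ by reinterpreting $p$ as evaluation at the two points $(1,0)$ and $(0,1)$. For a homogeneous $x\in H^n(BP)$, the coefficient of $a^n$ equals $x(1,0)$ and the coefficient of $b^n$ equals $x(0,1)$, so $p(x)=(x(1,0),x(0,1))$. Since $\varphi$ acts on $\FF_2[a,b]$ by the substitution $a\mapsto b$, $b\mapsto a+b$, a direct computation yields
\[\varphi(x)(1,0)=x(0,1),\quad \varphi(x)(0,1)=x(1,1),\quad \varphi(x)(1,1)=x(1,0).\]
Abbreviating $\alpha=x(1,0)$, $\beta=x(0,1)$, $\gamma=x(1,1)\in\FF_2$, this gives
\[p(x)=(\alpha,\beta),\quad p(\varphi(x))=(\beta,\gamma),\quad p(\varphi^2(x))=(\gamma,\alpha).\]

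Next I would extract two constraints on $(\alpha,\beta,\gamma)$ from \cref{prop:crit_notinv_parameters}\eqref{it:crit_notinv_parametersii}, which applies because $S$ generates a parameter ideal. Evaluating the relation $x+\varphi(x)+\varphi^2(x)=0$ at $(1,0)$ yields $\alpha+\beta+\gamma=0$. For the second constraint, observe that $\alpha=0$ if and only if $b\mid x$, and analogously $\beta=0\iff a\mid x$ and $\gamma=0\iff (a+b)\mid x$. Since $\varphi$ cyclically permutes the linear forms via $a\to b\to a+b\to a$, the linear factors of $\varphi(x)$ are the $\varphi$-translates of those of $x$; if $x$ had two or more linear factors among $\{a,b,a+b\}$, then $x$ and $\varphi(x)$ would share one, contradicting their coprimality. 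Hence at most one of $\alpha,\beta,\gamma$ vanishes.

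Combining the two constraints rules out the triple $(1,1,1)$, so $(\alpha,\beta,\gamma)$ has exactly one zero entry and the other two are both $1$. The three possibilities are the cyclic rotations of $(0,1,1)$, and in each case the three pairs $(\alpha,\beta)$, $(\beta,\gamma)$, $(\gamma,\alpha)$ are the three distinct nonzero elements of $\FF_2\oplus\FF_2$ in some cyclic order. In particular exactly one element $v\in S$ satisfies $p(v)=(1,0)$, and reading off the cyclic order gives $p(\varphi(v))=(0,1)$ and $p(\varphi^2(v))=(1,1)$. The main obstacle is setting up the correct dictionary between coprimality of $x$ and $\varphi(x)$ and the set of linear factors of $x$; once one notices that $\varphi$ acts as a $3$-cycle on $\{a,b,a+b\}$, the remainder is a finite enumeration in $\FF_2^3$.
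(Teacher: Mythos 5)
Your proof is correct. It rests on the same two inputs as the paper's argument, namely the relation $x+\varphi(x)+\varphi^2(x)=0$ and the coprimality of $x$ and $\varphi(x)$ supplied by \cref{prop:crit_notinv_parameters}\eqref{it:crit_notinv_parametersii}, but you organize the computation differently: you identify $p(x)$ with the pair of evaluations $(x(1,0),x(0,1))$, so that the values of $x$ at the three nonzero points of $\FF_2^2$ form a triple $(\alpha,\beta,\gamma)$ that $\varphi$ simply rotates. This makes the final identification $p(\varphi(v))=(0,1)$, $p(\varphi^2(v))=(1,1)$ automatic, whereas the paper obtains it by expanding $\varphi(v)=b^n+\sum_i\lambda_i b^i(a+b)^{n-i}$ and observing that no $a^n$ term survives. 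Your coprimality input is also phrased slightly differently (at most one of the linear forms $a$, $b$, $a+b$ divides $x$, since $\varphi$ acts as a $3$-cycle on these forms and two $2$-subsets of a $3$-set meet) versus the paper's direct observation that $p(w)=(0,0)$ would force $ab\mid w$ and hence $b\mid\gcd(w,\varphi(w))$; these are equivalent, but your version cleanly explains why $p$ is "equivariant on coprime orbits," which is exactly the content of the remark following \cref{lem:wloghighestsummands}. One small point worth making explicit: the phrase "exactly one element $v\in S$" tacitly uses that $S$ has three distinct elements, which follows from coprimality (if $x=\varphi(x)$ they could not be coprime in positive degree); the paper's proof has the same implicit step.
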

\begin{proof}
Let $w\in S$ be an arbitrary nonzero element of $S$. Then $p(w)$ cannot be zero, since otherwise $w$ would be divisible by $ab$ and $\varphi(w)$ would be divisible by $\varphi(ab)=b(a+b)$ contradicting that $w$ and $\varphi(w)$ are coprime. Analogously, $p(\varphi(w))$ and $p(\varphi^2(w))$ cannot be zero.
By \cref{prop:crit_notinv_parameters}, we have $0=p(0)=p(w)+p(\varphi(w))+p(\varphi^2(w))$ so that each nonzero element of $\FF_2^2$ must appear exactly once in the sum on the right-hand side. 
Let $v\in S$ be the element with $p(v)=(1,0)$, thus $v$ is of the form $a^n+\sum_{i=1}^{n-1}\lambda_i a^ib^{n-i}$. Since the coefficient of the monomial $a^n$ in $\varphi(v) = b^n+\sum_{i=1}^{n-1}\lambda_ib^i(a+b)^{n-i}$ is zero and $p(w)\neq 0$ for all $w\in S$,
we obtain $p(\varphi(v))=(0,1)$. Finally, we have $p(\varphi^2(v))=p(v)+p(\varphi(v))=(1,1)$.
\end{proof}

\begin{remark}
    If we equip $\FF_2\oplus \FF_2$ with the $C_3$-action, where the generator $\varphi\in C_3$ sends $(1,0)$ to $(0,1)$ and $(0,1)$ to $(1,1)$, then \cref{lem:wloghighestsummands}
    states that the map $p$ is equivariant on all elements $v$ such that $v,\varphi(v)$ are coprime. It is not equivariant on all elements; for example, $p(ab)=(0,0)$ while $p(\varphi(ab))=p(b(a+b))=(0,1)$.
\end{remark}

We split the classification of the Steenrod closed parameter ideals generated by an orbit in two cases. We will use the \emph{first Steenrod square} $\Sq^1$, i.e., the linear map $H^n(BP)\to H^{n+1}(BP)$ for $n\geq 0$ obtained from the total Steenrod square by restriction to the degree $n$ part and projection to the degree $n+1$ part. We have $\Sq^1(z) =z^2$ for $z\in H^1(BP)$ and $\Sq^1$ is a derivation as a linear map $H^*(BP)\to H^*(BP)$.

\begin{lemma}\label{lem:steenrododd}
    Let $n$ be odd. An orbit $S\subset H^n(BP)$ generates a Steenrod closed parameter ideal if and only if $n=1$ and $S=C_3a$.
\end{lemma}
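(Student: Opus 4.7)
The \emph{if} direction is immediate from \cref{lem:poweroftwo_orbit} with $n=1$, since the orbit $C_3 a = \{a,b,a+b\}$ generates the Steenrod closed parameter ideal $\langle a,b\rangle$.

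For the converse, assume $S = C_3 v \subset H^n(BP)$ of odd degree $n$ generates a Steenrod closed parameter ideal $I = \langle v, \varphi(v)\rangle$. By \cref{lem:wloghighestsummands} we may take $v$ with $p(v) = (1,0)$, and since $n$ is odd the Kameko decomposition reduces to $v = A^2 a + B^2 b$ with $A = \kappa_a(v)$, $B = \kappa_b(v)$ of degree $(n-1)/2$. The normalization $p(v) = (1,0)$ forces the coefficient of $a^{(n-1)/2}$ in $A$ to be $1$ and that of $b^{(n-1)/2}$ in $B$ to vanish. Since $\Sq^1$ is a derivation that vanishes on squares with $\Sq^1(a) = a^2$ and $\Sq^1(b) = b^2$, one computes
\[\Sq^1(v) = A^2 a^2 + B^2 b^2 = (Aa + Bb)^2 = X^2, \qquad X := Aa + Bb.\]

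Steenrod closedness yields $X^2 = \alpha v + \beta \varphi(v)$ with $\alpha = \alpha_a a + \alpha_b b$, $\beta = \beta_a a + \beta_b b$, and $\alpha_*, \beta_* \in \FF_2$. Comparing the coefficients of $a^{n+1}$ and $b^{n+1}$ forces $\alpha_a = 1$ and $\beta_b = 0$. Applying the Kameko maps $\kappa_1$ and $\kappa_{ab}$ to both sides, using $\varphi(v) = \varphi(B)^2 a + \varphi(A+B)^2 b$ together with the identities $\kappa_1(av) = Aa$, $\kappa_1(bv) = Bb$, $\kappa_1(a\varphi(v)) = \varphi(B)\,a$, $\kappa_1(b\varphi(v)) = \varphi(A+B)\,b$, $\kappa_1(X^2) = X$, and the analogous formulas for $\kappa_{ab}$, transforms the equation into
\[\beta_a \varphi(B)\,a + (1+\alpha_b) B\,b = 0, \qquad B + \alpha_b A + \beta_a \varphi(A+B) = 0.\]
The first equation has the form $Pa + Qb = 0$ in the UFD $\FF_2[a,b]$, which forces $(P,Q) = (bT,aT)$ for some $T$. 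A brief case split on whether $T$ and $B$ vanish yields the three possibilities
\[(\mathrm{i})\ v = A^2 a, \qquad (\mathrm{ii})\ v = A^2(a+b)\ \text{with}\ A=B, \qquad (\mathrm{iii})\ v = T^2\, ab(a+b).\]

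For each shape the orbit-sum relation $v + \varphi(v) + \varphi^2(v) = 0$ supplies the final constraint. In shapes (i) and (ii) it reduces to an equation of the form $aP^2 + bQ^2 = 0$ in $\FF_2[a,b]$; the key polynomial lemma, proved by infinite descent using $\gcd(a,b) = 1$, is that this forces $P = Q = 0$, so $A$ is $C_3$-invariant. In shape (iii) one computes directly that $\varphi(ab(a+b)) = ab(a+b)$ and $\varphi(T) = T$, whence $\varphi(v) = v$ and the orbit-sum identity gives $3v = v = 0$, contradicting $v \neq 0$. Coprimality of $v$ and $\varphi(v)$ then forces the invariant $A$ to be a nonzero constant: in shape (i) this yields $A = 1$, $n = 1$, and $v = a$; in shape (ii) it yields $v = a + b$, contradicting $p(v) = (1,0)$. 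Only shape (i) survives, giving $S = C_3 a$. The main obstacle is coordinating the Kameko-map computations to isolate the three shapes; once they are in hand, the invariance and coprimality arguments are routine.
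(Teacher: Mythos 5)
Your proof is correct and follows essentially the same route as the paper's: normalize $v$ via \cref{lem:wloghighestsummands}, compute $\Sq^1(v)$ through the Kameko decomposition, use Steenrod closedness to pin down the linear coefficients from the $a^{n+1}$ and $b^{n+1}$ monomials, apply $\kappa_1$ and $\kappa_{ab}$, and finish with a case analysis combining the orbit-sum relation with coprimality. The only (harmless) differences are organizational: you split into three ``shapes'' of $v$ rather than four cases on the coefficient pair, and you extract invariance of $A$ from the orbit-sum relation by a direct UFD argument instead of first applying $\kappa_a$ to it.
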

\begin{proof}  The orbit $C_3a$ generates a Steenrod closed parameter ideal by \cref{lem:poweroftwo_orbit}.

Conversely, suppose that an orbit $S$ generates a Steenrod closed parameter ideal. By \cref{lem:wloghighestsummands} there exists a unique $v \in S$ of the form $v=a^n+\sum_{i=1}^{n-1} \mu_i a^{n-i}b^i$. Moreover, the monomial $b^n$ appears in $\varphi(v)$, while $a^n$ does not.

Writing $v=ax^2+by^2$, we have $\Sq^1(v)=a^2x^2+b^2y^2$. By the choice of $v$, the coefficient of the monomial $a^{n+1}$ in $\Sq^1(v)$ is one, while the coefficient of $b^{n+1}$ is zero. Since the ideal generated by $S$ is Steenrod closed, we can write
\begin{align*}
\Sq^1(v)&=(\lambda_1 a+\lambda_2 b)v+(\nu_1a+\nu_2b)\varphi(v)\,
\end{align*}
with $\lambda_1,\lambda_2,\nu_1,\nu_2\in \FF_2$.
The observations about the monomials imply that $\lambda_1=1$ and $\nu_2=0$. Inserting $\varphi(v)= b\varphi(x)^2+(a+b)\varphi(y)^2$, we obtain
\begin{align*}
a^2x^2+b^2y^2=\Sq^1(v)=&(a+\lambda_2 b)v+(\nu_1a)\varphi(v)\\
=&(a+\lambda_2 b)(ax^2+by^2)+(\nu_1a)(b(\varphi(x)^2+\varphi(y)^2)+a\varphi(y)^2)\\
=&(a^2x^2+\lambda_2b^2y^2+\nu_1a^2\varphi(y^2))\\
&+ab(y^2+\lambda_2x^2+\nu_1(\varphi(x)^2+\varphi(y)^2))\,.
\end{align*}
Note that $\lambda_2^2=\lambda_2$ and $\nu_1^2=\nu_1$ since both elements are either zero or one. We apply the Kameko maps $\kappa_{1}$ and $\kappa_{ab}$ to deduce the equations
\begin{align}
\label{eq:oddkappa0}by &=\lambda_2by+\nu_1a\varphi(y)\\
\label{eq:oddkappaab}0&=y+\lambda_2x+\nu_1(\varphi(x)+\varphi(y))\,.
\end{align}
Applying the Kameko map $\kappa_a$ to $v+\varphi(v)+\varphi^2(v)=0$, we get
\begin{equation}\label{eq:noddeq1}
0=x+\varphi(y)+\varphi^2(x)+\varphi^2(y)\,.
\end{equation}
Now, we consider the four possible cases for the pair $(\lambda_2,\nu_1)$.
\begin{enumerate}[leftmargin=*]
    \item If $\lambda_2=0$, $\nu_1=0$, then $y=0$ by \eqref{eq:oddkappaab} and thus $v=ax^2$. By \eqref{eq:noddeq1}, $x=\varphi^2(x)$ and so $x$ is $\varphi$-invariant. Since $1=\gcd(v,\varphi(v))=\gcd(ax^2,\varphi(ax^2))=\gcd(ax^2,bx^2)$, it follows that $x=1$, i.e., $v=a$.
    \item If $\lambda_2=1$, $\nu_1=1$, then $y=0$ by \eqref{eq:oddkappa0} and it follows that $v=a$ as before.
    \item If $\lambda_2=0$, $\nu_1=1$, then $0=y+\varphi(x)+\varphi(y)$ by \eqref{eq:oddkappaab}. Applying $\varphi$ to this equation together with \eqref{eq:noddeq1} implies that $x=0$, contradicting that $a^n$ is a monomial in $v$. 
    \item If $\lambda_2=1$, $\nu_1=0$, then \eqref{eq:oddkappaab} shows that $x=y$ and thus $x=\varphi(x)$ by \eqref{eq:noddeq1}. Since $1=\gcd(v,\varphi(v))=\gcd((a+b)x^2,ax^2)$, we get $x=1$ and thus $v=a+b$. This contradicts the assumption that the monomial $b$ does not appear in $v$.
\end{enumerate}
We have shown that $v=a$ and hence $S=C_3a$ as claimed.
\end{proof}

\begin{lemma}\label{lem:steenrodeven}
    Let $n=2m$ be even. Then an orbit $S\subset H^n(BP)$ generates a Steenrod closed parameter ideal if and only if $S=T^{[2]}$ for an orbit $T\subset H^m(BP)$ that generates a Steenrod closed parameter ideal.
\end{lemma}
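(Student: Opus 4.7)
The reverse direction is immediate from \cref{lem:interestingsquared}: if an orbit $T\subset H^m(BP)$ generates a Steenrod closed parameter ideal, then so does $T^{[2]}=S$.

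For the forward direction, the plan is to produce a square root. I would first apply \cref{lem:wloghighestsummands} to single out the representative $v\in S$ with $p(v)=(1,0)$. Since $n=2m$ is even, the Kameko maps $\kappa_a$ and $\kappa_b$ vanish on $v$, so $v$ admits the unique decomposition $v=x^2+aby^2$ with $x=\kappa_1(v)$ and $y=\kappa_{ab}(v)$. The entire argument reduces to showing $y=0$: once that holds, $v=x^2$ gives $S=(C_3 x)^{[2]}$, the orbit $T\coloneqq C_3 x$ generates a parameter ideal (coprimality of $x,\varphi(x)$ descends from that of $v,\varphi(v)$, and $(x+\varphi(x)+\varphi^2(x))^2=v+\varphi(v)+\varphi^2(v)=0$ forces $x+\varphi(x)+\varphi^2(x)=0$ in $H^*(BP)$), and Steenrod closure of the ideal generated by $T$ then follows from \cref{lem:interestingsquared}.

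To force $y=0$, I would exploit Steenrod closure via the first Steenrod square. Since $\Sq^1$ vanishes on squares and acts as a derivation,
\[\Sq^1(v)=\Sq^1(ab)\,y^2=ab(a+b)y^2.\]
By Steenrod closure, $\Sq^1(v)=\alpha v+\beta\varphi(v)$ for some $\alpha,\beta\in H^1(BP)$. Comparing coefficients of $a^{n+1}$ and $b^{n+1}$ on both sides, using $p(v)=(1,0)$ and $p(\varphi(v))=(0,1)$ from \cref{lem:wloghighestsummands}, should eliminate the $a$-component of $\alpha$ and the $b$-component of $\beta$, leaving $\alpha=\lambda b$ and $\beta=\nu a$ for some scalars $\lambda,\nu\in\FF_2$. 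Applying the Kameko maps $\kappa_a$ and $\kappa_b$ to the resulting identity, using the product rule $\kappa_*(uz^2)=\kappa_*(u)\,z$ and the explicit form $\varphi(v)=\varphi(x)^2+b(a+b)\varphi(y)^2$, should yield the two relations
\[
by=\lambda by+\nu\varphi(x)+\nu b\varphi(y), \qquad ay=\lambda x+\nu a\varphi(y).
\]

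The main obstacle is the four-case analysis over $(\lambda,\nu)\in\FF_2^2$ that concludes $y=0$. I expect two of the cases to give $y=0$ directly from the relations above, while the remaining two cases will force either $x=ay$ (so that $v=ay^2(a+b)$ and $\varphi(v)=ab\varphi(y)^2$ share the factor $a$, contradicting coprimality of $v,\varphi(v)$) or $\varphi(x)=0$, hence $x=0$ (so that $v=aby^2$ is divisible by $ab$, contradicting that $a^n$ appears in $v$ with nonzero coefficient). Ruling out all four cases where $y\ne 0$ completes the argument.
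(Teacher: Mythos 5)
Your proposal is correct and follows essentially the same route as the paper's proof: the same distinguished representative $v$ from \cref{lem:wloghighestsummands}, the same decomposition $v=x^2+aby^2$, the identity $\Sq^1(v)=ab(a+b)y^2$, the coefficient comparison at $a^{n+1}$ and $b^{n+1}$ forcing $\alpha=\lambda b$ and $\beta=\nu a$, and the same two Kameko relations. The four-case analysis you anticipate does work out exactly as you predict (two cases yield $y=0$, one yields $x=ay$ and violates coprimality, one yields $\varphi(x)=0$), so there is no gap.
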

\begin{proof} If $T$ generates a Steenrod closed parameter ideal, so does $T^{[2]}$ by \cref{lem:interestingsquared}. Conversely, assume that an orbit $S\subset H^n(BP)$ generates a Steenrod closed parameter ideal. By \cref{lem:wloghighestsummands} there is a unique $v \in S$ of the form $v=a^n+\sum_{i=1}^{n-1} \mu_i a^{n-i}b^i$. Moreover, the monomial $b^n$ appears in $\varphi(v)$, while $a^n$ does not.

Writing $v=x^2+aby^2$, we have $\Sq^1(v)=(a^2b+ab^2)y^2$. We want to conclude that $y=0$. Since the ideal generated by $S$ is Steenrod closed, we can write
\begin{align*}
(a^2b+ab^2)y^2=\Sq^1(v)=(\lambda_1a+\lambda_2b)v+(\nu_1a+\nu_2b)\varphi(v)
\end{align*}
with $\lambda_1,\lambda_2,\nu_1,\nu_2\in \FF_2$. Since $a^{n+1}$ and $b^{n+1}$ do not appear on the left-hand side, we get $\lambda_1=0$ and $\nu_2=0$. Inserting $\varphi(v)=\varphi(x)^2+b(a+b)\varphi(y)^2$, we obtain
$$ 
(a^2b+ab^2)y^2=\lambda_2b(x^2+aby^2)+\nu_1a(\varphi(x)^2+b(a+b)\varphi(y)^2)\,.
$$
Using that $\lambda_2=\lambda_2^2$ and $\nu_1=\nu_1^2$, we apply $\kappa_a$ and $\kappa_b$ to deduce the equations
\begin{align}
\label{eq:evenkappaa}    by&=\lambda_2by+\nu_1\varphi(x)+\nu_1b \varphi(y)\\
\label{eq:evenkappab}    ay&=\lambda_2x+\nu_1a\varphi(y)\,.
\end{align}

We consider four cases:
\begin{enumerate}[leftmargin=*]
\item If $\lambda_2=0$, $\nu_1=0$,
then \eqref{eq:evenkappaa} simplifies to $by=0$. Hence $y=0$ and thus $v=x^2$.
\item If $\lambda_2=0$, $\nu_1=1$,
then $ay=a\varphi(y)$ by \eqref{eq:evenkappab}. Hence $y$ is $\varphi$-invariant. It follows from \eqref{eq:evenkappaa} that $x=0$ contradicting that $a^n$ appears in $v$.
\item If $\lambda_2=1$, $\nu_1=0$,
then $x=ay$ by \eqref{eq:evenkappab}. Thus $v=a(a+b)y^2$ and $\varphi(v) = ba\varphi(y)^2$ are divisible by $a$ contradicting that $v$ and $\varphi(v)$ are coprime.
\item If $\lambda_2=1$, $\nu_1=1$,
then $0=\varphi(x)+b\varphi(y)=\varphi(x+ay)$ by \eqref{eq:evenkappaa} and hence $x=ay$. It follows from \eqref{eq:evenkappab} that $0=y=x$. Thus $v=0$ contradicting that $S$ generates a parameter ideal.
\end{enumerate}
We have shown that $v=x^2$. Then $T=C_3x$ satisfies $T^{[2]}=S$ and generates a Steenrod closed parameter ideal by \cref{lem:interestingsquared}.
\end{proof}
Summarizing, we classify the Steenrod closed parameter ideals generated by an orbit.
\begin{theorem}\label{thm:classification_ideals_from_orbits}
Suppose that an orbit $S\subset H^*(BP)$ generates a parameter ideal. Then this ideal is Steenrod closed if and only if $C_3x=C_3a^n$ where $n$ is a power of two.
\end{theorem}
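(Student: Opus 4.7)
The classification is essentially a synthesis of the three preceding lemmas, so the plan is simply to assemble them via induction on the degree $n$ of the orbit.

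For the \emph{if} direction, \cref{lem:poweroftwo_orbit} already shows that when $n$ is a power of two the orbit $C_3 a^n$ generates a Steenrod closed parameter ideal, so there is nothing further to do.

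For the \emph{only if} direction, I would proceed by strong induction on $n$ with $S \subset H^n(BP)$. If $n$ is odd, then \cref{lem:steenrododd} forces $n=1$ and $S = C_3 a$; since $1 = 2^0$ is a power of two, we are done in this case. If $n$ is even, write $n = 2m$. By \cref{lem:steenrodeven}, there exists an orbit $T \subset H^m(BP)$ generating a Steenrod closed parameter ideal such that $S = T^{[2]}$. By the induction hypothesis applied to $T$, we have $T = C_3 a^{m'}$ for some power of two $m'$; but elements of $T$ lie in degree $m$, so $m' = m$ and $m$ itself is a power of two. Consequently $n = 2m$ is a power of two, and $S = T^{[2]} = (C_3 a^m)^{[2]} = C_3 a^{2m} = C_3 a^n$, completing the induction.

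There is no real obstacle here; all the substantive work — the degree parity analysis via the first Steenrod square $\Sq^1$ and the Kameko maps — has already been carried out in \cref{lem:steenrododd} and \cref{lem:steenrodeven}, and the reduction from even degree to half degree is handled by \cref{lem:interestingsquared}. The only point worth underlining in the write-up is that in the even case the induction hypothesis is legitimately applied to $T$ in strictly smaller degree $m < n$, so the recursion terminates at the base case $n = 1$.
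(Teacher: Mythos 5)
Your proof is correct and is essentially the paper's argument: the paper's own proof simply cites \cref{lem:poweroftwo_orbit} for the "if" direction and says the converse "follows from \cref{lem:steenrodeven} and \cref{lem:steenrododd}", leaving implicit exactly the strong induction on degree that you spell out. Your write-up just makes the recursion (odd degree forces $n=1$; even degree halves via $S=T^{[2]}$) explicit, which is a faithful elaboration rather than a different route.
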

\begin{proof}
If $n$ is a power of two, then the $C_3$-orbit of $a^n$ generates a Steenrod closed parameter ideal by \cref{lem:poweroftwo_orbit}. 
The converse follows from \cref{lem:steenrodeven} and \cref{lem:steenrododd}.
\end{proof}

\section{\texorpdfstring{Applications to $A_4$-actions on finite CW complexes}{Applications to A4-actions on finite CW complexes}}
In this section we provide topological applications of the algebraic results from \cref{sec:steenrod_closed_parameter_ideals}.

\begin{theorem}\label{thm:actionspaces}
    Let $X$ be a finite $A_4$-CW complex with four dimensional cohomology with a basis of degrees $0\leq n\leq m\leq d$ such that the restricted $P$-action is free. Let $J$ denote the kernel of $H^*(BP)\to H^*(X/P)$. Then the following statements are equivalent:
    \begin{enumerate}
        \item\label{it:actionspacesi} The $C_3$-action on $H^*(X)$ is nontrivial;
        \item\label{it:actionspacesii} $J$ is generated by a $C_3$-orbit of an element in $H^n(BP)$;
        \item\label{it:actionspacesiii} $n=m$;
        \item\label{it:actionspacesiv} $J=\langle a^{n+1}, b^{n+1}\rangle$ and $n+1$ is a power of two;
        \item\label{it:actionspcesv} the map $H^*(BP)\to H^*(X/P)$ induces an isomorphism \[\FF_2[a,b]/\langle a^{n+1},b^{n+1}\rangle\cong H^*(X/P)\]
        and $n+1$ is a power of two.
    \end{enumerate}
\end{theorem}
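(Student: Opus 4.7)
The plan is to translate the topological data into algebra via \cref{prop:Steenrodclosedparameteridealfromaction} and then feed the result into the classification developed in \cref{sec:steenrod_closed_parameter_ideals}. That proposition already tells us $J$ is a Steenrod closed $C_3$-invariant parameter ideal whose parameters have degrees $n+1$ and $m+1$. Moreover, since $3=[A_4:P]$ is invertible in $\FF_2$, the cellular cochain complex $C^*(X)$ is perfect over $\FF_2[A_4]$ with $H^*(C^*(X))\cong H^*(X)$ as $C_3$-representations, so \cref{prop:crit_inv_parameters} together with the dichotomy of \cref{prop:crit_notinv_parameters} is available.

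I would establish the equivalences by routing everything through~(ii). For (i)$\Leftrightarrow$(ii) I invoke the dichotomy directly: by \cref{prop:crit_inv_parameters}\eqref{it:crit_inv_parametersv} the $C_3$-action on $H^*(X)$ is trivial precisely when $J$ is an extension from $H^*(BA_4)$, and \cref{prop:crit_notinv_parameters} identifies the complementary case with orbit-generation. For (ii)$\Leftrightarrow$(iii) I appeal to \cref{lem:parameterssamedegree}, which characterizes orbit-generation of a Steenrod closed $C_3$-invariant parameter ideal by equal parameter degrees, i.e.\ $n+1=m+1$. For (ii)$\Rightarrow$(iv) I apply the main classification \cref{thm:classification_ideals_from_orbits}, which forces the generating orbit to be $C_3 a^{n+1}$ with $n+1$ a power of two; the Frobenius identity $(a+b)^{2^k}=a^{2^k}+b^{2^k}$ in characteristic two then collapses the orbit to $\{a^{n+1},b^{n+1},a^{n+1}+b^{n+1}\}$, yielding $J=\langle a^{n+1},b^{n+1}\rangle$. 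The converse (iv)$\Rightarrow$(ii) is the same orbit computation read backwards. Finally, (iv)$\Leftrightarrow$(v) is immediate from the surjectivity of $H^*(BP)\to H^*(X/P)$ with kernel $J$ in \cref{prop:Steenrodclosedparameteridealfromaction}.

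The main obstacle is concentrated in the step (ii)$\Rightarrow$(iv): this is where all the substantive work enters through \cref{thm:classification_ideals_from_orbits} (and, through it, the technical case analyses of \cref{lem:steenrododd} and \cref{lem:steenrodeven}). Everything else in the proof is a dictionary between topological, representation-theoretic, and ring-theoretic formulations of the same dichotomy, so once the classification of orbit-generated Steenrod closed parameter ideals is in hand, the five equivalences unwind by chasing definitions.
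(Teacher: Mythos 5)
Your proof is correct and follows essentially the same route as the paper: both reduce to \cref{prop:Steenrodclosedparameteridealfromaction} to see that $J$ is a Steenrod closed $C_3$-invariant parameter ideal with parameters in degrees $n+1$, $m+1$, and then chain all statements through~(ii) using \cref{prop:crit_inv_parameters}, \cref{prop:crit_notinv_parameters}, \cref{lem:parameterssamedegree}, and \cref{thm:classification_ideals_from_orbits}. The only difference is that you spell out the Frobenius computation identifying $\langle C_3 a^{2^k}\rangle$ with $\langle a^{2^k},b^{2^k}\rangle$, which the paper leaves implicit.
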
\begin{proof} 
The map $H^*(BP)\to H^*(X/P)$ is surjective by \cref{prop:Steenrodclosedparameteridealfromaction}, hence \eqref{it:actionspacesiv} and \eqref{it:actionspcesv} are equivalent.
By \cref{prop:Steenrodclosedparameteridealfromaction}, the ideal $J$ is a Steenrod closed $C_3$-invariant parameter ideal. We show that \eqref{it:actionspacesii} is equivalent to all of \eqref{it:actionspacesi}-\eqref{it:actionspacesiv}. The statements \eqref{it:actionspacesi} and \eqref{it:actionspacesii} are equivalent by \cref{prop:crit_inv_parameters} and \cref{prop:crit_notinv_parameters}. The parameters of $J$ have degrees $n+1$ and $m+1$ by \cite[Proposition~4.3]{ruepingstephan2024}. Thus, \eqref{it:actionspacesii} is equivalent to \eqref{it:actionspacesiii} by \cref{lem:parameterssamedegree}. Finally, \eqref{it:actionspacesii} is equivalent to \eqref{it:actionspacesiv} by \cref{thm:classification_ideals_from_orbits}.    
\end{proof}

Joint work with Erg{\" u}n Yal{\c c}{\i }n provides extensive restrictions for the possible degrees $(m,n)$ such that there exists a finite CW complex $X\simeq S^m\times S^n$ with a free $A_4$-action; see \cite[Corollary~7.9]{ruepingstephanyalcin2022}. For more general actions we have the following restrictions.
\begin{theorem}\label{thm:degreesofparameters}
    Let $X$ be a finite $A_4$-CW complex such that $H^*(X;\FF_2)$ is four-dimensional with homogeneous basis elements in degrees $0\leq m\leq n\leq d$.  If the restricted $P$-action is free, then the unordered pair $m+1,n+1$ is from the following list:
    \begin{enumerate}
    \item \label{it:degreesofparametersi} $(3k,2l)$ with $l\ge 1$ and $1\le k\le 2^t$, where $2^t$ is the largest power of $2$ dividing $l$, 
\item \label{it:degreesofparametersii} $(3i+2^{s+r+1}-2^{s+1},2^{s+r+1}-2^s)$
for $s\ge 0$, $r \ge 1$ and $0\le i<2^{s-1}$, or
\item \label{it:degreesofparametersiii}
$(2^j,2^j)$ for $j\geq 0$.
\end{enumerate}
\end{theorem}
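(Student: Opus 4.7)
The plan is to reduce the theorem to the classification of Steenrod closed $C_3$-invariant parameter ideals in $H^*(BP)$. By \cref{prop:Steenrodclosedparameteridealfromaction}, the kernel $J$ of $H^*(BP)\to H^*(X/P)$ is a Steenrod closed $C_3$-invariant parameter ideal whose two generators have degrees $m+1$ and $n+1$. So it suffices to determine which pairs of degrees can arise as the degrees of parameters of such an ideal, and then show that these pairs are exactly those in the union of the three lists.

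By the dichotomy provided by \cref{prop:crit_inv_parameters} and \cref{prop:crit_notinv_parameters}, the ideal $J$ falls into one of two mutually exclusive cases, which I would treat separately.

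\textbf{Case A: $J$ is generated by a $C_3$-orbit.} By \cref{lem:parameterssamedegree}, $J$ has parameters of the same degree $n+1=m+1$. By \cref{thm:classification_ideals_from_orbits}, $n+1$ must be a power of $2$, which is precisely list \eqref{it:degreesofparametersiii}.

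\textbf{Case B: $J$ is the extension of a parameter ideal $J'\subset H^*(BA_4)$.} Then $J$ and $J'$ have parameters of the same degrees by construction, and $J'$ is Steenrod closed by \cref{rem:bijectionextensionrestriction}. Hence it suffices to read off the degrees of parameters of all Steenrod closed parameter ideals in $H^*(BA_4)$, and this has already been done in \cite[Theorem~1.2]{ruepingstephanyalcin2022}. I would then match the degree pairs in that classification term by term against lists \eqref{it:degreesofparametersi} and \eqref{it:degreesofparametersii}; the expectation is that the families of Steenrod closed parameter ideals described there by two integer parameters precisely produce these two lists of degrees.

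The only nontrivial step in this plan is the bookkeeping in Case B: translating the generators in the classification \cite[Theorem~1.2]{ruepingstephanyalcin2022} (which live in the invariant ring $H^*(BA_4)$ with its Dickson-type generators in degrees $2$ and $3$) into the indexing used in \eqref{it:degreesofparametersi} and \eqref{it:degreesofparametersii}. This is essentially a degree computation using that the generators of $H^*(BA_4)$ are concentrated in degrees $2$ and $3$, which explains the factors of $2$ and $3$ that appear in the two lists; once this is carried out, no further obstacle remains.
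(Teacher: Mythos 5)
Your proposal is correct and follows essentially the same route as the paper: reduce to the degrees of parameters of the Steenrod closed $C_3$-invariant parameter ideal $J=\ker(H^*(BP)\to H^*(X/P))$ via \cref{prop:Steenrodclosedparameteridealfromaction}, split according to the dichotomy of \cref{prop:crit_inv_parameters}/\cref{prop:crit_notinv_parameters}, and handle the orbit case by \cref{thm:classification_ideals_from_orbits} and the extended-ideal case by the earlier classification over $H^*(BA_4)$. The only cosmetic difference is that the paper cites \cite[Corollary~6.13]{ruepingstephanyalcin2022}, which already records the degree lists \eqref{it:degreesofparametersi} and \eqref{it:degreesofparametersii}, so the bookkeeping you defer to the end is exactly the content of that cited corollary.
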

\begin{proof}
    The kernel of $H^*(BP)\to H^*(X/P)$ is a $C_3$-invariant Steenrod closed parameter ideal with parameters of degrees $m+1$, $n+1$ by \cref{prop:Steenrodclosedparameteridealfromaction}. If the parameter ideal satisfies the equivalent conditions of \cref{prop:crit_inv_parameters}, then the degrees of the parameters must be from \eqref{it:degreesofparametersi} or \eqref{it:degreesofparametersii} by \cite[Corollary~6.13]{ruepingstephanyalcin2022}. Otherwise, the parameter ideal satisfies the equivalent conditions from \cref{prop:crit_notinv_parameters} and thus has parameters of degrees $(2^j, 2^j)$ by \cref{thm:classification_ideals_from_orbits}.
\end{proof}

\begin{corollary}\label{cor:percentageofpairs}
    For $r\ge 0$, the percentage of those pairs $(m,n)$ with $0\le m,n\le r$ such that there exists a finite $P$-free, $A_4$-CW complex whose mod $2$ cohomology is four-dimensional with homogeneous basis elements in degrees $0,m,n,d$ for some $d\ge m,n$ tends to zero as $r\to \infty$.
\end{corollary}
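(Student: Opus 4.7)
The plan is to invoke \cref{thm:degreesofparameters}, which forces the unordered pair $\{m+1, n+1\}$ to lie in one of the three families (i), (ii), (iii) listed there. It then suffices to show that the number of such unordered pairs with both entries in $\{1, \dots, r+1\}$ is $o(r^2)$: since each unordered pair contributes at most two ordered pairs $(m,n)$ with $0\le m, n\le r$ and the total number of such ordered pairs is $(r+1)^2$, this yields a percentage tending to $0$.

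Family (iii) trivially contributes only $O(\log r)$ pairs. For family (i), writing $2^{t(l)}$ for the largest power of $2$ dividing $l$, I would use that for each $l$ with $2l\le r+1$ there are at most $2^{t(l)}$ admissible values of $k$. Grouping by $t$ and using that at most $\lfloor (r+1)/2^{t+2}\rfloor + 1$ integers $l$ in $\{1,\dots,\lfloor (r+1)/2\rfloor\}$ satisfy $t(l)=t$, the contribution of family (i) is bounded by
\[\sum_{t=0}^{\lfloor \log_2 r\rfloor} 2^t \left(\frac{r+1}{2^{t+2}} + 1\right) = O(r\log r).\]

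For family (ii), the second coordinate $2^s(2^{u+1}-1)$ being at most $r+1$ forces $s + u \le \log_2(r+1) + O(1)$. For each such pair $(s,u)$ with $s, u \ge 1$ there are $2^{s-1}$ admissible $i$, so the contribution is bounded by
\[\sum_{\substack{s, u\ge 1 \\ s+u\le N}} 2^{s-1} \;=\; \sum_{s=1}^{N-1}2^{s-1}(N-s) \;=\; O(2^N) \;=\; O(r),\]
where $N = \lfloor\log_2(r+1)\rfloor + O(1)$; the last equality is a standard geometric-series estimate (the series $\sum_{k\ge 1} k/2^k$ converges).

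Combining the three bounds, the total number of admissible ordered pairs is $O(r\log r)$, so the percentage is $O(\log r / r)\to 0$ as $r\to\infty$. The only mildly technical step is the dyadic estimate for family (i); once that is in place, the rest reduces to direct summation. I do not foresee any serious obstacle — the content lies essentially in \cref{thm:degreesofparameters}, and the corollary is a quick counting consequence.
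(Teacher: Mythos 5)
Your argument is correct and follows the same overall strategy as the paper: reduce to \cref{thm:degreesofparameters} and count the admissible pairs in each of the three families. The only difference is that the paper delegates the counting of families (i) and (ii) to \cite[Lemma~6.15]{ruepingstephanyalcin2022}, whereas you carry out the dyadic estimates explicitly; your bounds ($O(r\log r)$ for family (i), $O(r)$ for family (ii), $O(\log r)$ for family (iii)) are all valid, and the conclusion that the proportion is $O(\log r/r)$ follows. One trivial quibble: in family (ii) you restrict to $s\ge 1$, but the theorem allows $s=0$ (where $0\le i<2^{-1}$ forces $i=0$); this contributes only $O(\log r)$ additional pairs and does not affect the asymptotics.
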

\begin{proof} This follows from \cref{thm:degreesofparameters} using \cite[Lemma~6.15]{ruepingstephanyalcin2022} and that the number of pairs of the form \eqref{it:degreesofparametersiii} grows like $\log(r)$.
\end{proof}

In \cite[Theorem~1.3]{ruepingstephan2024} we showed that
cochain complexes of finite, free $A_4$-CW complexes
with four-dimensional cohomology are rigid: They depend only on the degrees of the nonzero cohomology groups.  Here we extend this result to $P$-free $A_4$-CW complexes.

\begin{theorem}\label{thm:rigid} Suppose that $X$ and $Y$ are two finite, $P$-free, $A_4$-CW complexes with four-dimensional cohomologies. If the graded vector spaces $H^*(X)$ and $H^*(Y)$ are isomorphic, then their cellular cochain complexes $C^*(X)$ and $C^*(Y)$ are homotopy equivalent over $\FF_2[A_4]$.    
\end{theorem}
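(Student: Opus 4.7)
The plan is to apply the algebraic classification \cref{thm:classification} with $Q = C_3$ to the cellular cochain complexes $C^*(X)$ and $C^*(Y)$. First I would check that both are perfect over $\FF_2[A_4]$: since the restricted $P$-action is free, the isotropy of each cell meets $P$ trivially and is therefore either trivial or a conjugate of $C_3$, and because $|C_3|=3$ is a unit in $\FF_2$, the permutation module $\FF_2[A_4/C_3]$ is a direct summand of $\FF_2[A_4]$. Hence both $C^*(X)$ and $C^*(Y)$ are bounded complexes of projective $\FF_2[A_4]$-modules, so any quasi-isomorphism between them automatically lifts to a chain homotopy equivalence. By \cref{thm:classification} it therefore suffices to show that the pairs $(l_X, J_X)$ and $(l_Y, J_Y)$ attached to $C^*(X)$ and $C^*(Y)$ coincide.

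The integer $l$ is the lowest degree of nonvanishing cohomology, which is the same for $X$ and $Y$ by the assumed graded isomorphism. By \cref{prop:Steenrodclosedparameteridealfromaction}, the ideals $J_X, J_Y \subset H^*(BP)$ are Steenrod closed $C_3$-invariant parameter ideals whose parameters share the same pair of degrees, determined by the middle cohomology degrees of $H^*(X)\cong H^*(Y)$. At this point I would split using \cref{thm:actionspaces}. When the two middle degrees are equal, the equivalence of \eqref{it:actionspacesiii} and \eqref{it:actionspacesiv} in that theorem pins down $J_X = J_Y = \langle a^{n+1}, b^{n+1}\rangle$ on the nose, with $n+1$ a power of two. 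When the middle degrees differ, \cref{thm:actionspaces} combined with \cref{prop:crit_inv_parameters} and \cref{prop:crit_notinv_parameters} shows that $J_X$ and $J_Y$ are extensions of Steenrod closed parameter ideals $J'_X, J'_Y \subset H^*(BA_4)$ sharing the same pair of parameter degrees; via the bijection of \cref{rem:bijectionextensionrestriction}, it then suffices to prove $J'_X = J'_Y$.

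The main obstacle is this last equality, which amounts to showing that a Steenrod closed parameter ideal in $H^*(BA_4)$ is determined by its pair of parameter degrees. I would extract this from the explicit classification \cite[Theorem~1.2]{ruepingstephanyalcin2022} of Steenrod closed parameter ideals in $H^*(BA_4)$, which produces exactly one such ideal for each admissible degree pair; this same uniqueness is also the algebraic content of \cite[Theorem~1.3]{ruepingstephan2024} in the free-$A_4$ setting. Once $J'_X = J'_Y$ is in hand, \cref{rem:bijectionextensionrestriction} gives $J_X = J_Y$, and \cref{thm:classification} delivers the claimed chain homotopy equivalence of $C^*(X)$ and $C^*(Y)$ over $\FF_2[A_4]$.
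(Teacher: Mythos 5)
Your proposal is correct and follows essentially the same route as the paper: reduce via \cref{thm:classification} to showing the annihilator ideals agree, note they are Steenrod closed $C_3$-invariant parameter ideals with matching parameter degrees by \cref{prop:Steenrodclosedparameteridealfromaction}, and split into the equal-degree case (handled by \cref{thm:actionspaces}) and the distinct-degree case (reduced via \cref{rem:bijectionextensionrestriction} to uniqueness in $H^*(BA_4)$ from the earlier classification). Your added justification of perfectness via the isotropy groups and the projectivity of $\FF_2[A_4/C_3]$ is a correct elaboration of a step the paper only asserts.
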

\begin{proof}
    The cellular cochain complexes are bounded and consist of finitely generated projective $\FF_2[A_4]$-modules, that is, they are perfect. Note that the smallest degree with nonzero homology is zero. 
    Thus, by \cref{thm:classification}, the two chain complexes are homotopy equivalent if and only if their annihilator ideals in $H^*(BP)$ agree. 

    By \cref{prop:Steenrodclosedparameteridealfromaction}, these ideals are Steenrod closed $C_3$-invariant parameter ideals. The degrees of the parameters only depend on the cohomology by \cite[Proposition~4.3]{ruepingstephan2024}. Thus, the degrees of the parameters for $X$ agree with the degrees of the parameters for $Y$ if $H^*(X)\cong H^*(Y)$. It remains to show that there is at most one $C_3$-invariant Steenrod closed parameter ideal with parameters of two given degrees. If the two degrees are the same, then there is only one such ideal by \cref{thm:actionspaces}.
    
    If the two degrees are different, then the ideals are not generated by an orbit. Instead, by \cref{prop:crit_notinv_parameters}, they are $C_3$-invariant Steenrod closed parameter ideals that satisfy the equivalent conditions from \cref{prop:crit_inv_parameters}. These ideals are extensions of Steenrod closed parameter ideals in $H^*(BA_4)$ by \cref{rem:bijectionextensionrestriction}. Since parameters for an ideal in $H^*(BA_4)$ also form a system of parameters of its extension in $H^*(BP)$, it suffices to establish uniqueness of Steenrod closed parameter ideals with parameters of given degrees in $H^*(BA_4)$. This holds by \cite[Corollary~6.13]{ruepingstephanyalcin2022}.
\end{proof}

\begin{remark} The cellular cochain complexes of finite, free $P$-CW complexes $X$ with four-dimensional cohomologies are not determined by $H^*(X)$. For instance, under the classification \cref{thm:classification}, 
the cellular cochain complex of the product action of the antipodal actions on $S^2$ and $S^3$ corresponds to the parameter ideal $\langle a^3,b^4\rangle$ whereas the free $P$-action on $S^2\times S^3$ given by restricting a free $A_4$-action corresponds to $\langle a^2b+ab^2, a^4+a^2b^2+b^4 \rangle$.
\end{remark}

Next we determine for which $n$ the ideal $J=\langle a^{n+1}, b^{n+1}\rangle$ from \cref{thm:actionspaces} can be realized by a $P$-free $A_4$-action on a product of two spheres $X=S^n\times S^n$.
\begin{theorem}\label{thm:oddaction0137} 
If $A_4$ acts on a finite CW complex homotopy equivalent to $S^n\times S^n$ such that the action of its $2$-Sylow subgroup $P=\ZZ/2\times \ZZ/2$ is free, then $n\in\{0,1,3,7\}$. Conversely, for each $n\in\{0,1,3,7\}$ there exists such an $A_4$-action on $S^n\times S^n$.
\end{theorem}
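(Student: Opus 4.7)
The plan is to handle the two implications separately: the existence of a $P$-free $A_4$-action on $S^n\times S^n$ for each $n\in\{0,1,3,7\}$, and the necessity of this constraint on $n$.

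For existence when $n\in\{1,3,7\}$ I would cite the explicit constructions of Plakhta \cite{plaktha1996}, which produce $A_4$-actions on $S^n\times S^n$ whose restriction to the $2$-Sylow subgroup is free. The case $n=0$ is elementary: identify $S^0\times S^0$ with the four-element coset space $A_4/C_3$ carrying the natural left $A_4$-action, whose restriction to $P$ is free because $|P|=4=|A_4/C_3|$ and $P\cap C_3=\{1\}$.

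For necessity, suppose $A_4$ acts on a finite CW complex $X\simeq S^n\times S^n$ with the restricted $P$-action free. Then $H^*(X;\FF_2)$ has a homogeneous basis in degrees $0,n,n,2n$, so the two middle degrees coincide and condition \eqref{it:actionspacesiii} of \cref{thm:actionspaces} is satisfied. By its equivalence with \eqref{it:actionspacesiv}, the kernel $J$ of $H^*(BP)\to H^*(X/P)$ equals $\langle a^{n+1},b^{n+1}\rangle$ and $n+1$ is a power of $2$. Hence $n$ lies in $\{2^k-1:k\geq 0\}=\{0,1,3,7,15,31,\ldots\}$.

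It remains to exclude $n=2^k-1$ for $k\geq 4$, which is where the topological obstructions come in. I would invoke the results of Schultz \cite{schultz1971} and Adem--Browder \cite{adembrowder1988} on free actions of elementary abelian $2$-groups on products of equi-dimensional spheres, applied to the free $P$-action on $X\simeq S^n\times S^n$ and the nontrivial $C_3$-action on $H^*(X)$ supplied by \cref{thm:actionspaces}\eqref{it:actionspacesi}. These classical inputs force $n\in\{0,1,3,7\}$, which together with the power-of-two conclusion from \cref{thm:actionspaces} completes the proof. The main obstacle is verifying that the hypotheses of the Schultz and Adem--Browder theorems are met in our setting and that their restriction is sharp enough to eliminate all $n=2^k-1$ with $k\geq 4$; this is the only nontrivial external input the argument relies on beyond the algebraic machinery developed earlier in the paper.
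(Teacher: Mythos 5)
Your proposal follows essentially the same route as the paper: \cref{thm:actionspaces} supplies the algebraic input, the Schultz/Adem--Browder obstruction rules out large $n$, and the constructions via the multiplications in $\RR,\CC,\HH,\OO$ (Plakhta) give existence for $n\in\{0,1,3,7\}$. The one step you explicitly leave open --- checking that the hypotheses of Schultz's theorem are met --- is closed as follows: by \cref{thm:actionspaces}\eqref{it:actionspacesi} the $C_3$-action on the two-dimensional space $H^n(X;\FF_2)$ is nontrivial, so all three nonzero vectors lie in a single $C_3$-orbit and $H^n(X;\FF_2)$ is not a permutation module; Schultz's result \cite[p.~139]{schultz1971} (see also \cite[Lemma~5.1]{adembrowder1988}) then yields $n\in\{0,1,3,7\}$ directly, making your intermediate power-of-two reduction via \eqref{it:actionspacesiv} unnecessary (though harmless).
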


\begin{proof} Given such an $A_4$-action on a finite CW complex $X\simeq S^n\times S^n$, the $C_3$-action on the two-dimensional vector space $H^n(X;\FF_2)$ is nontrivial by \cref{thm:actionspaces}. Thus all three nonzero elements must lie in the same orbit. In particular, $H^n(X;\FF_2)$ does not have a basis consisting of $C_3$-orbits, i.e., is not a permutation module. It follows that $n\in \{0,1,3,7\}$ by a result of Schultz \cite[p.~139]{schultz1971} (see also \cite[Lemma~5.1]{adembrowder1988}).

Conversely, for $n=0,1,3,7$, we can use the multiplications in $\mathbb{R},\mathbb{C},\mathbb{H},\mathbb{O}$ to extend the free diagonal $P$-action on $S^n\times S^n$ that multiplies with $\pm 1$ on each factor to an $A_4$-action; 
see \cite[Proof of Theorem~3.4]{blaszczyk2013free}, \cite{plaktha1996}.
\end{proof}

\begin{question}
Which ideals $J=\langle a^{n+1}, b^{n+1}\rangle$ can be realized by a finite $A_4$-CW complex as in \cref{thm:actionspaces} for $n=2^k-1$ with $k>3$?    
\end{question}

\bibliographystyle{amsalpha}
\bibliography{bibl}
\end{document}